\definecolor{gr}{rgb}{0.7, 1, 0.7}
\definecolor{rr}{rgb}{1, 0.7, 0.7}
\theoremstyle{plain} 
\newtheorem{theorem}{Theorem}[section]
\newtheorem{lemma}[theorem]{Lemma}
\newtheorem{corollary}[theorem]{Corollary}
\newtheorem{proposition}[theorem]{Proposition}
\theoremstyle{definition} 
\newtheorem{definition}[theorem]{Definition}
\theoremstyle{remark} 
\newtheorem{remark}[theorem]{Remark}
\renewcommand{\mathfrak}{\mathbf}
\newcommand{\ignore}[1]{}
\newcommand{\eps}{\varepsilon}
\newcommand{\bbC}{\mathbb{C}}
\newcommand{\bbN}{\mathbb{N}}
\newcommand{\bbR}{\mathbb{R}}
\newcommand{\bbZ}{\mathbb{Z}}
\newcommand{\NN}{\mathbb N}
\newcommand{\RR}{\mathbb R}
\newcommand{\cR}{\mathcal R}
\newcommand{\dist}{\operatorname{dist}}
\newcommand{\diam}{\operatorname{diam}}
\renewcommand{\mod}{\operatorname{mod}}
\begin{document}
\title{Renormalization for unimodal maps with non-integer exponents}
\thanks{MY was partially supported by NSERC Discovery Grant}
\begin{abstract}
We define an analytic setting for renormalization of unimodal maps with an arbitrary critical exponent. We prove the global Hyperbolicity of Renormalization conjecture for unimodal maps of bounded type with a critical exponent which is sufficiently close to an even integer.
\end{abstract}

\author{Igors Gorbovickis}
\address{Uppsala University, Uppsala, Sweden}
\email{igors.gorbovickis@math.uu.se}

\author{Michael Yampolsky}
\address{University of Toronto, Toronto, Canada}
\email{yampol@math.toronto.edu}

\subjclass[2010]{}
\keywords{}

\date{\today}
\maketitle

\section{Preliminaries and statements of results}
Renormalization theory of unimodal maps has been the cornerstone of the modern development of one-dimensional real and complex dynamics. Seminal works of Sullivan \cite{Sull-qc,deMelo_vanStrien} and Douady and Hubbard \cite{DH85} put Feigenbaum-Collet-Tresser (FCT) universality conjectures into the context of holomorphic dynamics. Renormalization theory of analytic unimodal maps of the interval was completed by McMullen \cite{McM-ren2} and Lyubich \cite{Lyubich-Hairiness}. 

However, it was known since the early days of the development of the renormalization theory, that FCT-type universality was also observed in families of smooth unimodal maps of the interval with a critical point of an {\it arbitrary} order $\alpha>1$, such as, for instance, the unimodal family
$$x\mapsto |x|^\alpha+c,\; c\in\RR.$$
Of course, if $\alpha$ is not an even integer, such unimodal maps do not have single-valued analytic extensions to the complex plane, and therefore, the existing theory does not cover these cases.

One of the historical directions in which the problem of non-even integer exponents has been attacked are attempts to develop a purely "real" renormalization theory; that is, one that does not rely on complex-analytic techniques. Let us mention in this regard the beautiful paper of Martens \cite{Martens1} in which periodic orbits of unimodal renormalization are constructed for an arbitrary $\alpha$; as well as the work of Cruz and Smania \cite{SmCr} which continues Martens' approach.

We take a different approach in our paper. Below, we will construct a suitable {\it analytic} setting for renormalization of unimodal maps with  $\alpha\notin 2\NN$. In this setting, the renormalization operators become a smoothly parametrized family $\cR_\alpha$. Because of this, we are able to continue the renormalization hyperbolicity results of Sullivan, McMullen, and Lyubich to the values of exponents which are sufficiently close to even integers. For such $\alpha$'s, we obtain a {\it global} FCT renormalization horseshoe picture for maps of bounded combinatorial type, completely settling the theory in these cases.

Although our main results are perturbative, the construction of the family $\cR_\alpha$ works for all $\alpha>1$, thus giving us a general setting in which the theory may potentially be completed some day.

Let us note that this work grew out of our previous work on renormalization of non-analytic critical circle maps \cite{GorYam}. There is a  rather significant difference: in the setting of analytic critical circle maps, the definition of renormalization and Banach spaces it acts upon is rather involved (see \cite{Ya3}) and includes highly non-trivial changes of coordinates. The unimodal setting is, in comparison, very straightforward, making the proofs much shorter and technically less involved.

Our main results are Theorem~\ref{Global_attractor_theorem_expanded} in which we construct the global hyperbolic renormalization horseshoe for even unimodal maps ($f(x)=f(-x)$) of bounded type, and 
Theorem~\ref{non_even_theorem} in which we extend the result to non-even maps. We state these theorems in the following section, after having made the appropriate definitions. 
As a direct application of our main results, in Corollary~\ref{rigidity_corollary} we prove $C^{1+\beta}$-rigidity of Cantor attractors for infinitely renormalizable maps with  bounded combinatorics.


\subsection{Unimodal maps and their renormalization}

\begin{definition}
Let $\alpha$ be a real number such that $\alpha>1$. A smooth map $f\colon[-1,1]\to[-1,1]$ is unimodal with critical exponent $\alpha$, if there exists a point $c=c_f\in(-1,1)$, such that 
\begin{enumerate}[(i)]
 \item $f'(x)>0$, for all $x\in[-1,c)$;
 \item $f'(x)<0$, for all $x\in(c,1]$;
 \item in a neighborhood of the point $c$, the function $f$ can be reperesented as 
\begin{equation}\label{f_equals_psi_alpha_phi_equation}
f(x)=\psi(-|\phi(x)|^\alpha),
\end{equation}
where $\phi(c)=0$, and $\phi$ and $\psi$ are local orientation preserving diffeomorphisms in some neighborhoods of $c$ and $0$ respectively.
\end{enumerate}
\end{definition}

We denote the space of all unimodal maps by $\mathfrak U$.
We will say that a unimodal map $f$ is $C^k$-smooth ($C^\infty$-smooth, or analytic), if $f$ is of class $C^k$ ($C^\infty$, or analytic) on the intervals $[-1,c_f)$ and $(c_f,1]$, and there exists a decomposition~(\ref{f_equals_psi_alpha_phi_equation}), such that $\phi$ and $\psi$ are also of class $C^k$ ($C^\infty$, or analytic) in some neighborhoods of $c_f$ and $0$ respectively. 
The space of all analytic unimodal maps will be denoted by $\mathfrak U^\omega$.

\begin{definition}\label{renormalizable_def}
A unimodal map $f$ is \textit{renormalizable}, if there exists an integer $m\ge 2$ and a closed interval $J=[a,b]\subset\bbR$, such that $c_f\in(a,b)$, $f^m(J)\subset J$ and the intervals $J, f(J),\dots, f^{m-1}(J)$ have pairwise disjoint interior. The smallest $m$ with this property is called the \textit{renormalization period} of $f$.
\end{definition}

For two points $p,q\in\bbC$, $p\neq q$, let $A_{p,q}\colon\bbC\to\bbC$ denote the linear map
$$A_{p,q}(z)=\frac{(z-p)+(z-q)}{q-p},$$ so that $A_{p,q}(q)=1$ and $A_{p,q}(p)=-1$. 

\begin{definition}\label{renormalization_def}
Assume that a unimodal map $f\colon[-1,1]\to[-1,1]$ is renormalizable with period $m$, and let $J=[a,b]$ be the maximal interval satisfying the conditions of Definition~\ref{renormalizable_def}. 
Let $p=a$, $q=b$, if $(f^m)'(a)>0$ and $p=b$, $q=a$ otherwise. Then the map
$$
\cR(f)=A_{p,q}\circ f^m\circ A_{p,q}^{-1}
$$
is called the \textit{renormalization} of $f$.
\end{definition}

It is easy to check that the map $\cR(f)$ is also unimodal with the same critical exponent. 
If $\cR(f)$ is also renormalizable, then we say that $f$ is \textit{twice renormalizable}. This way we define $n$ times renormalizable unimodal maps, for all $n=1, 2, 3,\dots$, including $n=\infty$.

If a unimodal map $f$ is renormalizable with period $m$, then the relative order of the intervals $J, f(J),f^2(J),\dots,f^{m-1}(J)$ inside $[-1,1]$ determines a permutation $\theta(f)$ of $\{0,1,\dots,m-1\}$. A permutation $\theta$ is called \textit{unimodal}, if there exists a renormalizable unimodal map $f$, such that $\theta=\theta(f)$. The set of all unimodal permutations will be denoted by $\mathbf P$. 

For $n\in\bbN\cup\{\infty\}$ and a subset $\Theta\subset\mathbf P$, let $\mathcal S_\Theta^n$ be the set of all $n$ times renormalizable unimodal maps $f$, such that $\theta(\cR^j(f))\in\Theta$, for all $j=0,1,2,\dots,n-1$. For $f\in\mathcal S_{\mathbf P}^\infty$, let $\rho(f)$ be the infinite sequence of permutations $(\theta(f),\theta(\cR(f)),\theta(\cR^2(f)),\dots)\subset\mathbf P^\bbN$. 

We say that two infinitely renormalizable unimodal maps $f$ and $g$ are of the same combinatorial type, if $\rho(f)=\rho(g)$.


\subsection{Hyperbolic renormalization attractor}
In the remaining part of the paper we will work only with analytic unimodal maps from $\mathfrak U^\omega$.



For a compact set $K\subset\bbC$ and a positive real number $r>0$, let $N_r(S)$ denote the $r$-neighborhood of $K$ in $\bbC$, namely,
$$
N_r(K)=\{z\in\bbC\mid \min_{w\in K}|z-w|<r\}.
$$




For a Jordan domain $\Omega\subset\bbC$, let $\mathcal B(\Omega)$ denote the space of all analytic maps $f\colon\Omega\to\bbC$ that continuously extend to the closure $\overline\Omega$. The set $\mathcal B(\Omega)$ equipped with the sup-norm, is a complex Banach space.
If $\Omega$ is symmetric with respect to the real axis, we let $\mathcal B^\bbR(\Omega)\subset\mathcal B(\Omega)$ denote the real Banach space of all real-symmetric functions from $\mathcal B(\Omega)$.


\begin{definition}
For a positive real number $r>0$, let $\tilde{\mathfrak B}_r\subset\mathcal B^\bbR(N_r([-1,0]))$ be the set of all maps $\psi\in\mathcal B^\bbR(N_r([-1,0]))$ that are univalent in some neighborhood of the interval $[-1,0]$, and such that $\psi(-1)=-1$, $-1<\psi(0)\le 1$. Let $\mathfrak B_r\subset\tilde{\mathfrak B}_r$ be the subset of all $\psi\in\tilde{\mathfrak B}_r$, such that $\psi(0)<1$. 
\end{definition}

\begin{proposition}\label{B_r_alpha_submanifold_prop}
For any real number $r>0$, the sets $\mathfrak B_r$ and $\tilde{\mathfrak B}_r\setminus \mathfrak B_r$ are respectively codimension~$1$ and codimension~$2$ affine submanifolds of $\mathcal B^\bbR(N_r([-1,0]))$.
\end{proposition}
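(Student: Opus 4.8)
The claim is that two affine conditions cut out $\mathfrak B_r$ and $\tilde{\mathfrak B}_r \setminus \mathfrak B_r$ inside the Banach space $\mathcal B^\bbR(N_r([-1,0]))$. The natural approach is to exhibit the defining equations explicitly as bounded affine (indeed, affine-linear) functionals and check that the remaining defining conditions (univalence near $[-1,0]$, and the strict inequalities $-1 < \psi(0)$, $\psi(0) \le 1$) are \emph{open} conditions, so that they do not affect the local manifold structure but only restrict to an open subset. Concretely, consider the two evaluation functionals $E_{-1}, E_0 \colon \mathcal B^\bbR(N_r([-1,0])) \to \bbR$ given by $E_{-1}(\psi) = \psi(-1)$ and $E_0(\psi) = \psi(0)$. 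These are continuous $\bbR$-linear functionals (evaluation at a point is bounded by the sup-norm), and they are linearly independent, since one can easily write down functions in the space realizing any prescribed pair of values at $-1$ and $0$ (e.g. affine functions $z \mapsto az+b$, which lie in $\mathcal B^\bbR(N_r([-1,0]))$ for every $r$).

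First I would record that $\tilde{\mathfrak B}_r$ is, by definition, the set of $\psi$ satisfying $E_{-1}(\psi) = -1$, together with the open conditions $-1 < E_0(\psi)$, $E_0(\psi) \le 1$... — here the condition $E_0(\psi)\le 1$ is not open on its own, but note that $\tilde{\mathfrak B}_r \setminus \mathfrak B_r$ is precisely where $E_0(\psi) = 1$, so on $\tilde{\mathfrak B}_r$ we have the dichotomy $E_0(\psi) < 1$ (this is $\mathfrak B_r$) or $E_0(\psi) = 1$. Thus:
\begin{itemize}
\item[] $\mathfrak B_r = \{\psi : E_{-1}(\psi) = -1\} \cap \{\psi : \text{$\psi$ univalent near } [-1,0]\} \cap \{\psi : -1 < E_0(\psi) < 1\}$,
\item[] $\tilde{\mathfrak B}_r \setminus \mathfrak B_r = \{\psi : E_{-1}(\psi) = -1,\ E_0(\psi) = 1\} \cap \{\psi : \text{$\psi$ univalent near } [-1,0]\} \cap \{\psi : E_0(\psi) > -1\}$.
\end{itemize}
The set $H_1 := \{E_{-1} = -1\}$ is a closed affine hyperplane (codimension $1$), and $H_2 := \{E_{-1} = -1,\ E_0 = 1\}$ is a closed affine subspace of codimension $2$, because $E_{-1}$ and $E_0$ are linearly independent so their joint level set is a genuine codimension-$2$ affine subspace (its direction is the kernel of the surjective map $\psi \mapsto (E_{-1}(\psi), E_0(\psi))$, which splits as a Banach space since $\bbR^2$ is finite-dimensional).

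Next I would argue that intersecting with the remaining conditions does not change the submanifold structure: univalence of $\psi$ in \emph{some} neighborhood of $[-1,0]$ is an open condition on $\mathcal B^\bbR(N_r([-1,0]))$ (a uniform limit of maps that fail to be univalent on a fixed neighborhood would also fail there, by Hurwitz's theorem applied on slightly shrunk neighborhoods; more precisely, the complement of the univalent set is closed, which is the standard fact that univalence is an open property in the compact-open topology on a fixed domain), and the strict inequalities $-1 < E_0(\psi) < 1$ (resp. $E_0(\psi) > -1$) are open since $E_0$ is continuous. Hence $\mathfrak B_r$ is the intersection of the affine hyperplane $H_1$ with an open subset $U \subset \mathcal B^\bbR(N_r([-1,0]))$, and $\tilde{\mathfrak B}_r \setminus \mathfrak B_r$ is the intersection of the codimension-$2$ affine subspace $H_2$ with an open subset $U'$. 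An intersection of a closed affine subspace with an open set is an (embedded, affine) submanifold of the same codimension, with the induced affine structure, provided it is nonempty — and nonemptiness is immediate since $\mathfrak B_r$ and $\tilde{\mathfrak B}_r\setminus\mathfrak B_r$ are known to contain renormalizations of actual unimodal maps, or one can just exhibit an explicit element (a suitable polynomial). This completes the proof.

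The only point requiring a little care — and the step I expect to be the mild obstacle — is the openness of the univalence condition: one must be careful that "univalent in \emph{some} neighborhood of $[-1,0]$" is the right (open) statement, as opposed to univalence on a \emph{fixed} neighborhood, and that a sequence $\psi_n \to \psi$ with each $\psi_n$ univalent near $[-1,0]$ forces $\psi$ univalent near $[-1,0]$; this is where one invokes that the set of maps in $\mathcal B(\Omega)$ univalent on a fixed relatively compact subdomain is open, together with a compactness/exhaustion argument for $[-1,0]$. Everything else is the standard fact that a codimension-$k$ closed affine subspace of a Banach space, intersected with a nonempty open set, is a codimension-$k$ affine submanifold.
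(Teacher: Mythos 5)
Your proof is correct and follows essentially the same route as the paper's: identify the affine constraints $\psi(-1)=-1$ (and, for $\tilde{\mathfrak B}_r\setminus\mathfrak B_r$, additionally $\psi(0)=1$) as evaluation-functional level sets of codimension $1$ and $2$, and observe that the remaining conditions carve out an open subset. The paper's proof is terser — it simply names the codimension-one subspace $\mathcal B^\bbR_{r,-1}=\ker E_{-1}$ and asserts openness without comment — whereas you explicitly verify linear independence of $E_{-1},E_0$ and address the openness of the univalence condition, which the paper leaves implicit; these are additions of detail, not a different argument. (One cosmetic slip: in your description of $\tilde{\mathfrak B}_r\setminus\mathfrak B_r$, the condition $E_0(\psi)>-1$ is redundant once $E_0(\psi)=1$ is imposed.)
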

\begin{proof}
Let $\mathcal B^\bbR_{r,-1}$ 
denote the Banach subspace of $\mathcal B^\bbR(N_r([-1,0]))$ that consists of all $\psi\in\mathcal B^\bbR(N_r([-1,0]))$, such that $\psi(-1)=0$.
Then, $\mathfrak B_r$ is an open subset of the affine Banach space $-1+\mathcal B^\bbR_{r,-1}$. Similarly, $\tilde{\mathfrak B}_r\setminus \mathfrak B_r$ is a codimension~$1$ affine submanifold of $-1+\mathcal B^\bbR_{r,-1}$.
\end{proof}

For each positive $\alpha>1$ we define a map $j_\alpha\colon\tilde{\mathfrak B}_r\to\mathfrak U$ that associates a unimodal map to every element of $\tilde{\mathfrak B}_r$ according to the formula
$$
[j_\alpha(\psi)](x)= \psi(-|x|^\alpha).
$$
Clearly, for every $\alpha>1$, the map $j_\alpha$ is one-to-one.

\begin{definition}
For real numbers $r>0$, $\alpha>1$, let $\tilde{\mathfrak A}_r^\alpha,\mathfrak A_r^\alpha\subset\mathfrak U$ be the spaces of analytic unimodal maps with critical exponent $\alpha$, defined by
$$
\tilde{\mathfrak A}_r^\alpha=j_\alpha(\tilde{\mathfrak B}_r),\qquad\text{and}\qquad\mathfrak A_r^\alpha=j_\alpha(\mathfrak B_r).
$$
Let $\mathfrak A^\alpha$ and $\tilde{\mathfrak A}^\alpha$ denote the spaces of all unimodal maps $f$, such that $f\in\mathfrak A_r^\alpha$ and $f\in\tilde{\mathfrak A}_r^\alpha$ respectively, for some $r>0$. 
\end{definition}

For each $\alpha>1$, the space $\mathfrak A_r^\alpha$ has a structure of a real affine Banach manifold 
inherited from $\mathfrak B_r$. The Banach manifold structure induces a metric $\dist_r(\cdot,\cdot)$ on $\tilde{\mathfrak A}_r^\alpha$, defined as follows: for any pair of maps $f_1,f_2\in\tilde{\mathfrak A}_r^\alpha$, such that $f_1=j_\alpha(\psi_1)$ and $f_2=j_\alpha(\psi_2)$, 
\begin{equation}\label{dist_r_metric_eq}
\dist_r(f_1,f_2)=\sup_{z\in N_r([-1,0])}|\psi_1(z)-\psi_2(z)|.
\end{equation}

Our main result is the following theorem, which extends the Sullivan-McMullen-Lyubich FCT hyperbolicity of renormalization to unimodal maps with critical exponents $\alpha$ close to even integers:

\begin{theorem}[{\bf Hyperbolic renormalization attractor}]\label{Global_attractor_theorem_expanded}
For every $k\in\bbN$ and a non-empty finite set $\Theta\subset\mathbf P$, there exist an open interval $J=J(k,\Theta)\subset\bbR$ containing the number $2k$, a positive real number $r=r(k)>0$, and a positive integer $N=N(k)\in\bbN$, such that 
for every $\alpha\in J$, there exist
an open set $\mathcal O_\alpha=\mathcal O_\alpha(\Theta)\subset\mathfrak A_r^\alpha\cap\mathcal S_{\Theta}^N$ 
and an $\cR$-invariant compact set $\mathcal I_\Theta^\alpha\subset\mathcal O_\alpha\cap S_\Theta^\infty$ with the following properties.
\begin{enumerate}[(i)]
\item (horseshoe property): The action of $\cR$ on $\mathcal I_\Theta^\alpha$ is topologically conjugate to the two-sided shift $\sigma\colon\Theta^\bbZ\to\Theta^\bbZ$:
$$
\iota_\alpha\circ\mathcal R\circ \iota_\alpha^{-1}=\sigma,
$$
and if 
$$
f=\iota^{-1}_\alpha(\dots,\theta_{-k},\dots,\theta_{-1},\theta_0,\theta_1,\dots,\theta_k,\dots),
$$
then 
$$
\rho(f)=[\theta_0,\theta_1,\dots,\theta_k,\dots].
$$

\item (global stable sets): For every $f\in\mathbf A^\alpha\cap S_\Theta^\infty$, there exists $M\in\bbN$, such that for all $m\ge M$ the renormalizations $\cR^m(f)$ belong to $\mathbf A_r^\alpha$ and for every $g\in\mathcal I_\Theta^\alpha$ with $\rho(f)=\rho(g)$, we have
\begin{equation}\label{pair_exp_convergence_eq}
\dist_r(\cR^m(f),\cR^m(g))\le C\lambda^m,
\end{equation}
for some constants $C>0$, $\lambda\in(0,1)$ that depend only on $\Theta$ and~$\alpha$.

\item (hyperbolicity): $\cR^N(\mathcal O_\alpha)\subset\mathfrak A_r^\alpha$, the operator $\cR^N\colon \mathcal O_\alpha\to\mathfrak A_r^\alpha$ is analytic, and $\mathcal I_\Theta^\alpha$ is a locally maximal uniformly hyperbolic set for $\cR^N$ with a one-dimensional unstable direction.
\end{enumerate}
\end{theorem}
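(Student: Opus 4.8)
The plan is to obtain Theorem~\ref{Global_attractor_theorem_expanded} as a perturbation, in the parameter $\alpha$, of the classical even-integer theory at $\alpha_0=2k$: one transports the Sullivan--McMullen--Lyubich renormalization horseshoe to nearby $\alpha$ using structural stability of uniformly hyperbolic sets for the analytic family $\cR_\alpha$, and then upgrades the resulting local picture to the global stable-set statement~(ii) via real a priori bounds together with rigidity. At the even integer $\alpha_0=2k$ one has $|x|^{\alpha_0}=x^{2k}$, so for every $r>0$ the space $\mathfrak A_r^{\alpha_0}$ is a real-analytic slice of the classical space of analytic unimodal maps of critical exponent $2k$, and $\cR_{\alpha_0}$ is the restriction of the classical operator. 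By the Sullivan--McMullen--Lyubich theory \cite{McM-ren2,Lyubich-Hairiness}, available for every even integer critical exponent, there exist $N=N(k)$, $r=r(k)>0$, an open set $\mathcal O_{\alpha_0}\subset\mathfrak A_r^{\alpha_0}\cap\mathcal S_\Theta^N$ on which $\cR_{\alpha_0}^N$ is analytic, and a compact locally maximal uniformly hyperbolic set $\mathcal I_\Theta^{\alpha_0}\subset\mathcal O_{\alpha_0}$ with one-dimensional unstable and codimension-one stable directions, on which $\cR_{\alpha_0}$ is topologically conjugate to the two-sided shift on $\Theta^\bbZ$ with the itinerary correspondence of~(i); moreover $\mathcal I_\Theta^{\alpha_0}$ is exactly the closure of $\{\cR^m(f):f\in\mathbf A^{\alpha_0}\cap S_\Theta^\infty,\ m\ \text{large}\}$, this family is precompact, and~(\ref{pair_exp_convergence_eq}) holds at $\alpha_0$.

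\emph{Perturbation; items (i) and (iii).} After the identifications $\mathfrak A_r^\alpha\cong\mathfrak B_r$ given by $j_\alpha$, the construction of the family $\cR_\alpha$ shows that $\cR_\alpha^N$, restricted to maps whose first $N$ renormalizations have combinatorics in $\Theta$, is an analytic operator on a fixed neighborhood in the complex Banach space associated to $\mathfrak B_r$, depending analytically --- in particular $C^1$ --- on $\alpha$ for $\alpha$ near $2k$; the essential point is that the non-analyticity of $x\mapsto|x|^\alpha$ is invisible in the coordinate $\psi$, so renormalization acts on $\psi$ by analytic composition operators, and the domain $N_r([-1,0])$ of analyticity of the renormalized coordinate is preserved because the combinatorial type is bounded and fixed (this is where the a priori bounds enter). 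Since $\mathcal I_\Theta^{\alpha_0}$ is compact, locally maximal and uniformly hyperbolic for the analytic operator $\cR_{\alpha_0}^N$, the standard persistence theory of hyperbolic sets (graph transform / invariant cone fields in the Banach setting) yields, for $\alpha$ in a sufficiently small interval $J\ni 2k$: an open set $\mathcal O_\alpha\subset\mathfrak A_r^\alpha\cap\mathcal S_\Theta^N$ with $\cR^N(\mathcal O_\alpha)\subset\mathfrak A_r^\alpha$ and $\cR^N$ analytic there; a compact locally maximal uniformly hyperbolic set $\mathcal I_\Theta^\alpha\subset\mathcal O_\alpha$ with one-dimensional unstable direction, $C^0$-close in the common chart $\mathfrak B_r$ to $\mathcal I_\Theta^{\alpha_0}$; and a homeomorphism $\mathcal I_\Theta^{\alpha_0}\to\mathcal I_\Theta^\alpha$ conjugating $\cR_{\alpha_0}^N$ to $\cR_\alpha^N$, which composed with the base conjugacy produces $\iota_\alpha$. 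Because renormalizability and the combinatorial type $\theta(\cdot)$ are locally constant and the set is $\cR^N$-invariant inside $\mathcal S_\Theta^N$, we get $\mathcal I_\Theta^\alpha\subset S_\Theta^\infty$ and the itinerary/combinatorics correspondence of~(i) holds verbatim. This establishes (i) and (iii).

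\emph{Global stable sets; item (ii).} Let $f\in\mathbf A^\alpha\cap S_\Theta^\infty$. The real a priori bounds for unimodal maps of bounded combinatorial type, valid for every critical exponent $\alpha>1$, together with the accompanying control of the domain of analyticity of the coordinate $\psi$, provide $M(f)\in\bbN$ with $\cR^m(f)\in\mathfrak A_r^\alpha$ for all $m\ge M(f)$, the totality of such deep renormalizations (over all $f$ and all $\alpha\in J$) being precompact. Hence $\mathcal K_\Theta^\alpha:=\overline{\{\cR^m(f):f\in\mathbf A^\alpha\cap S_\Theta^\infty,\ m\ \text{large}\}}$ is compact, $\cR$-invariant, consists of infinitely renormalizable maps of type $\Theta$, and is upper semicontinuous in $\alpha$; since $\mathcal K_\Theta^{2k}=\mathcal I_\Theta^{2k}$, for $J$ small enough $\mathcal K_\Theta^\alpha$ lies in the neighborhood in which $\mathcal I_\Theta^\alpha$ is the maximal $\cR^N$-invariant set, so $\mathcal K_\Theta^\alpha\subseteq\mathcal I_\Theta^\alpha$, the reverse inclusion following from density of a forward shift-orbit in $\mathcal I_\Theta^\alpha$. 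Therefore $\dist_r(\cR^m(f),\mathcal I_\Theta^\alpha)\to 0$. Finally, the local product structure of the hyperbolic set, applied to the orbit $(\cR^m f)_{m\ge M}$, which stays near $\mathcal I_\Theta^\alpha$ and realizes the itinerary $\rho(f)=\rho(g)$, shows that $\cR^M(f)$ lies on the local stable manifold of the orbit of $g$; the uniform contraction of $D\cR_\alpha^N$ along the codimension-one stable direction then gives~(\ref{pair_exp_convergence_eq}) with $\lambda\in(0,1)$ the stable contraction rate and $C$ depending only on $\Theta$ and $\alpha$.

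\emph{Main obstacle.} Items (i) and (iii) are a fairly routine perturbation once the analytic family $\cR_\alpha$ and the even-integer base case are in hand. The crux is the a priori-bounds input underlying the analyticity of $\cR_\alpha^N$ on a uniform domain and the global convergence in~(ii): for non-integer $\alpha$ the maps $j_\alpha(\psi)$ have no global complex extension, so the classical complex bounds must be replaced by uniform control of the domain of analyticity of the coordinate $\psi$ of deep renormalizations, over bounded combinatorics and over $\alpha$ near $2k$. Establishing this (in the spirit of \cite{GorYam}, starting from the real bounds and promoting them to the analytic $\psi$-category) and then deducing $\mathcal K_\Theta^\alpha=\mathcal I_\Theta^\alpha$ is the heart of the argument.
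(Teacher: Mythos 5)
Your proposal follows essentially the same strategy as the paper: transport the even-integer hyperbolic horseshoe to nearby $\alpha$ via structural stability for the analytic family $\cR_\alpha$ in the fixed chart $\mathfrak B_r$, then upgrade to the global statement by combining real and complex a priori bounds (to get precompactness of deep renormalizations) with local maximality of the perturbed hyperbolic set. Your identification of the ``main obstacle'' --- establishing uniform complex bounds on the $\psi$-coordinate for non-integer $\alpha$ and deducing that the limit set of deep renormalizations is contained in $\mathcal I_\Theta^\alpha$ --- matches exactly what the paper builds via the $\mathbf H^I(\mu)$ compactness machinery (Lemmas~\ref{definite_r_lemma},~\ref{H_mu_compact_lemma}, Theorems~\ref{real bounds_extended},~\ref{complex bounds_extended}, and Proposition~\ref{second_step_conv_proposition}).

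One concrete gap: you assert that the itinerary correspondence of~(i) ``holds verbatim'' as part of the structural-stability step, before proving~(ii). Structural stability for $\cR^N$ only yields a homeomorphism $h_\alpha\colon\mathcal I_\Theta^{2k}\to\mathcal I_\Theta^\alpha$ conjugating $\cR^N|_{\mathcal I_\Theta^{2k}}$ to $\cR^N|_{\mathcal I_\Theta^\alpha}$; it does not directly give $\cR$-invariance of $\mathcal I_\Theta^\alpha$, nor that $\iota_\alpha\circ\cR\circ\iota_\alpha^{-1}=\sigma$ (as opposed to some other $N$-th root of $\sigma^N$). The paper proves the properties in the order (iii) $\Rightarrow$ (ii) $\Rightarrow$ (i): it first derives $\cR(\mathcal I_\Theta^\alpha)=\mathcal I_\Theta^\alpha$ from the uniform convergence established in~(ii), and then shows that $\iota_\alpha\circ\cR\circ\iota_\alpha^{-1}$, being continuous in $\alpha\in J$ with values in maps to the totally disconnected $\Theta^\bbZ$ and equal to $\sigma$ at $\alpha=2k$, must equal $\sigma$ for all $\alpha\in J$. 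You would need to either reorder your argument as the paper does, or supply a separate argument (e.g.\ uniqueness of hyperbolic continuations applied to $\cR(\mathcal I_\Theta^\alpha)$ together with the same continuity-plus-total-disconnectedness reasoning) to close this step.
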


We note that for an even renormalizable unimodal map $f\in\mathfrak U^\omega$, we have $\cR(f)\in\mathbf A^\alpha$, hence 
Theorem~\ref{Global_attractor_theorem_expanded} settles the Renormalization Hyperbolicity Conjecture for the space of all even unimodal maps from $\mathfrak U^\omega$ with bounded combinatorial type and critical exponents sufficiently close to $2\bbN$. In the following general theorem we extend the results of Theorem~\ref{Global_attractor_theorem_expanded} to the case of general (i.e. not necessarily even) analytic unimodal maps. In order to state the theorem, we start with some definitions.

\begin{definition}
For a positive real number $r>0$, let $\mathbf{\Phi}_r\subset\mathcal B^\bbR(N_r([-1,1]))$ be the set of all maps $\phi\in\mathcal B^\bbR(N_r([-1,1]))$ that are univalent in some neighborhood of the interval $[-1,1]$, and such that $\phi(-1)=-1$, and $\phi(1)=1$.
\end{definition} 

\begin{proposition}\label{Phi_r_submanifold_prop}
For any real number $r>0$, the set $\mathbf{\Phi}_r$ is a codimension~$2$ affine submanifolds of $\mathcal B^\bbR(N_r([-1,1]))$.
\end{proposition}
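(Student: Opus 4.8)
The plan is to mimic the proof of Proposition~\ref{B_r_alpha_submanifold_prop} almost verbatim, replacing the two point-conditions at $-1$ and $0$ (one of which was the inequality $\psi(0)<1$ defining the codimension-one open submanifold, the other the equality $\psi(-1)=-1$) by the two equality conditions $\phi(-1)=-1$ and $\phi(1)=1$. Concretely, I would introduce the closed complex Banach subspace
$$
\mathcal B^\bbR_{r,\pm 1}=\{\phi\in\mathcal B^\bbR(N_r([-1,1]))\mid \phi(-1)=0,\ \phi(1)=0\},
$$
and observe that the evaluation functionals $\phi\mapsto\phi(-1)$ and $\phi\mapsto\phi(1)$ are bounded $\bbR$-linear functionals on $\mathcal B^\bbR(N_r([-1,1]))$, hence $\mathcal B^\bbR_{r,\pm 1}$ is the kernel of a bounded linear map into $\bbR^2$.

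The first point to check is that these two functionals are \emph{linearly independent}, so that $\mathcal B^\bbR_{r,\pm 1}$ genuinely has codimension $2$: this is immediate since $-1$ and $1$ are distinct points of $N_r([-1,1])$ and one can exhibit, e.g., affine functions on which the pair of evaluations takes any prescribed value in $\bbR^2$. Equivalently, the linear map $\phi\mapsto(\phi(-1),\phi(1))$ is surjective onto $\bbR^2$ with closed kernel, so $\mathcal B^\bbR_{r,\pm 1}$ is a closed codimension-$2$ subspace. Next I would pick any fixed reference map in $\mathbf{\Phi}_r$, for instance the identity $z\mapsto z$ (which is univalent on all of $\bbC$ and satisfies $\phi(\pm1)=\pm1$), and note that the affine subspace $\mathrm{id}+\mathcal B^\bbR_{r,\pm 1}$ is precisely the set of all $\phi\in\mathcal B^\bbR(N_r([-1,1]))$ satisfying the two boundary conditions $\phi(-1)=-1$, $\phi(1)=1$.

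It then remains to see that $\mathbf{\Phi}_r$ is an \emph{open} subset of this affine subspace, so that it is a (codimension-$2$) affine submanifold in the sense used for $\mathfrak B_r$. The only extra requirement beyond the two equalities is univalence in some neighborhood of $[-1,1]$; I would argue that this is an open condition in the sup-norm. Given $\phi_0\in\mathbf{\Phi}_r$ univalent on an open neighborhood $U\supset[-1,1]$, shrink to a slightly smaller neighborhood $U'$ with $[-1,1]\subset U'\Subset U$ on whose closure $\phi_0$ is still univalent; then for $\phi$ sup-norm-close to $\phi_0$ on $N_r([-1,1])$, Cauchy estimates give closeness of $\phi$ to $\phi_0$ in $C^1$ on $\overline{U'}$, and univalence on a compact set with smooth boundary is stable under small $C^0$ (hence $C^1$) perturbations — e.g. by the argument principle applied to $\phi(z)-\phi(w)$, or by Hurwitz's theorem. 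Hence a sup-norm neighborhood of $\phi_0$ inside $\mathrm{id}+\mathcal B^\bbR_{r,\pm1}$ lies in $\mathbf{\Phi}_r$.

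The main (and really the only) obstacle worth a moment's care is the last step: making precise that ``univalent on some neighborhood of $[-1,1]$'' is an open condition, since the neighborhood is allowed to vary with $\phi$. This is handled exactly as above by fixing an intermediate neighborhood $U'$ depending only on $\phi_0$ and using Cauchy/Hurwitz estimates there; everything else is the same bookkeeping as in Proposition~\ref{B_r_alpha_submanifold_prop}, with the single open-inequality constraint there replaced by one additional closed (equality) constraint here, which is why the codimension rises from $1$ to $2$.
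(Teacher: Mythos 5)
Your proposal is correct and follows essentially the same route the paper intends: the paper simply says the proof is analogous to Proposition~\ref{B_r_alpha_submanifold_prop}, i.e.\ identify the codimension-$2$ affine subspace cut out by the two evaluation conditions at $\pm 1$ and observe that $\mathbf{\Phi}_r$ is an open subset of it. Your extra care about openness of the univalence condition (via Cauchy estimates/Hurwitz) is a detail the paper leaves implicit, but it does not change the approach.
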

\begin{proof}
The proof is analogous to the proof of Proposition~\ref{B_r_alpha_submanifold_prop}.
\end{proof}

For $\phi\in\mathbf{\Phi}_r$, let ${\mathbf A}^\alpha(\phi)\subset\mathfrak U^\omega$ be the set of all $g\in\mathfrak U^\omega$, such that 
\begin{equation}\label{f_conjug_eq}
g=\phi^{-1}\circ f\circ\phi,
\end{equation}
for some $f\in{\mathbf A}^\alpha$. 
Let ${\mathbf A}^\alpha(\mathbf{\Phi}_r)\subset\mathfrak U^\omega$ be the union
$$
{\mathbf A}^\alpha(\mathbf{\Phi}_r)=\bigcup_{\phi\in\mathbf{\Phi}_r} {\mathbf A}^\alpha(\phi).
$$
It is easy to check that if $f\in{\mathbf A}^\alpha$ is renormalizable with the affine rescaling $A_{p,q}$ as in Definition~\ref{renormalization_def}, 
then the map $g$ from~(\ref{f_conjug_eq}) is also renormalizable and
$$
\cR(g)= [\mathcal F_f(\phi)]^{-1}\circ\cR(f)\circ\mathcal F_f(\phi),
$$
where
$$
\mathcal F_f(\phi)= A_{p,q}\circ\phi\circ A_{\phi^{-1}(p),\phi^{-1}(q)}^{-1} \in\mathbf{\Phi}_r.
$$
This allows us to define the operator $\tilde{\cR}\colon(\mathcal S_{\mathbf P}^1\cap\mathbf A^\alpha)\times\mathbf{\Phi}_r\to\tilde{\mathbf A}^\alpha\times\mathbf{\Phi}_r$ as a skew product
$$
\tilde{\cR}(f,\phi)=(\cR(f),\mathcal F_f(\phi)).
$$
For $f\in\mathcal S_{\mathbf P}^\infty\cap\mathbf A^\alpha$ and $n\in\bbN$, let $\mathcal F_f^n(\phi)$ denote the map $\phi_n\in\mathbf{\Phi}_r$, such that $\tilde{\cR}^n(f,\phi)=(\cR^n(f),\phi_n)$. Let $\|\cdot\|_r$ denote the Banach norm in $\mathcal B(N_r([-1,1]))$.

The following theorem reduces the general case of analytic unimodal maps to the case of even ones. The proof follows from real a priori bounds (c.f.~\cite{deMelo_vanStrien}) and will be given in Section~\ref{non_even_sec}.

\begin{theorem}\label{non_even_theorem}
(i) For every $g\in\mathfrak U^\omega\cap\mathcal S_{\mathbf P}^\infty$ with critical exponent $\alpha$ and for a positive real number $r>0$, there exists $K_1=K_1(g,r)\in\bbN$, such that for every $k\ge K_1$, we have $\cR^k(g)\in\mathbf A^\alpha(\mathbf{\Phi}_r)$.

(ii) For every pair of real numbers $r>0$, $\alpha\in(1,+\infty)$ and for every $f\in\mathbf A^\alpha\cap\mathcal S_{\mathbf P}^\infty$ and $\phi\in\mathbf{\Phi}_r$, there exists $K_2=K_2(f, \phi)\in\bbN$, such that for all $k\ge K_2$, we have
$$
\|\mathcal F_f^k(\phi)-\mathrm{id}\|_r\le C\lambda^k,
$$
for some constants $C>0$, $\lambda\in(0,1)$ that depend only on $\alpha$.
\end{theorem}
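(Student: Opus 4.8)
The plan is to deduce both statements from real \emph{a priori} bounds for infinitely renormalizable unimodal maps of bounded type, exactly as in \cite{deMelo_vanStrien}. For part~(i), start with $g\in\mathfrak U^\omega\cap\mathcal S_{\mathbf P}^\infty$ and write it near its critical point as $g(x)=\psi(-|\phi(x)|^\alpha)$ with $\phi,\psi$ analytic local diffeomorphisms. The key observation is that renormalization ``flattens out'' the non-critical distortion: after normalizing $\cR^k(g)$ to the interval $[-1,1]$, the real \emph{a priori} bounds (geometric control of the renormalization intervals, with constants depending only on the bounded combinatorial type) force the map $\cR^k(g)$ to extend to a definite complex neighborhood and, more importantly, to be of the form $\phi_k^{-1}\circ f_k\circ\phi_k$ where $f_k\in\mathbf A^\alpha$ is an even map (in a possibly smaller domain) and $\phi_k\in\mathbf{\Phi}_r$. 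Concretely, I would build $\phi_k$ by conjugating the natural ``pasting'' coordinate that straightens the folding part of $\cR^k(g)$: the critical value geometry of high renormalizations is controlled, so the diffeomorphism needed to make the critical-exponent normal form exact has bounded distortion and, crucially, extends univalently to $N_r([-1,1])$ once $k$ is large enough (this is where $K_1$ enters). This gives $\cR^k(g)\in\mathbf A^\alpha(\mathbf{\Phi}_r)$ for all $k\ge K_1$.

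For part~(ii), fix $f\in\mathbf A^\alpha\cap\mathcal S_{\mathbf P}^\infty$ and $\phi\in\mathbf{\Phi}_r$, and recall the cocycle formula
$$
\mathcal F_f(\phi)= A_{p,q}\circ\phi\circ A_{\phi^{-1}(p),\phi^{-1}(q)}^{-1},
$$
so that $\mathcal F_f^k(\phi)$ is obtained by iterating this along the orbit $\cR^k(f)$. The heuristic is that each step conjugates $\phi$ by an affine map associated to an interval $J_k$ whose length shrinks exponentially (again by real \emph{a priori} bounds for bounded type, with rate depending only on $\alpha$ through the class of limiting geometries). I would write $\phi=\mathrm{id}+\eta$ with $\eta$ analytic on $N_r([-1,1])$ and $\eta(\pm1)=0$, and track how $\eta$ transforms under one step of the cocycle: the affine rescaling by the small interval $J_k$ contracts the relevant part of $\eta$ by roughly $|J_k|$ in the appropriate norm, while the composition structure only multiplies constants. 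Summing the geometric series in $k$ (and using that $\phi^{-1}(p),\phi^{-1}(q)$ stay uniformly inside the domain, so all the affine maps involved are uniformly non-degenerate) yields $\|\mathcal F_f^k(\phi)-\mathrm{id}\|_r\le C\lambda^k$ with $\lambda\in(0,1)$ depending only on $\alpha$.

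The main obstacle is part~(i): producing the \emph{analytic} straightening diffeomorphism $\phi_k$ with a \emph{uniform} domain $N_r([-1,1])$ from what is a priori only a real-analytic normal form on a small real neighborhood. The real bounds give $C^2$ (even $C^{1+\mathrm{Lip}}$) control automatically, but upgrading to an honest element of $\mathbf{\Phi}_r$ requires showing that the high renormalizations $\cR^k(g)$ have complex-analytic extensions to a definite neighborhood \emph{and} that the decomposition $g=\psi(-|\phi(x)|^\alpha)$ can be arranged with $\phi$ holomorphic there — this is the point where one must invoke complex bounds / compactness of the renormalization limit set and quasiconformal rigidity-type arguments, rather than purely real estimates. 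I expect this to be the technical heart of Section~\ref{non_even_sec}; part~(ii) should then be a relatively routine cocycle contraction estimate once the setup of part~(i) is in place.
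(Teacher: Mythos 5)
Your instincts about using real {\it a priori} bounds are right, but you have misidentified what is actually hard. In part~(i) you declare that ``the main obstacle'' is upgrading a real-analytic normal form on a small real neighborhood to a univalent $\phi_k$ on $N_r([-1,1])$, and that this ``is the point where one must invoke complex bounds / compactness of the renormalization limit set and quasiconformal rigidity-type arguments.'' That is not the argument, and none of it is needed. By the very definition of $\mathfrak U^\omega$, the local decomposition $g(x)=\psi(-|\phi(x)|^\alpha)$ already comes with $\phi$ holomorphic and univalent on some fixed (possibly tiny) complex disk about $c_g$. The real bounds then produce a constant $\mu=\mu(\alpha)>1$ and an integer $K(g)$ such that for $k\ge K$ every renormalization step is a rescaling by an affine map $A_{p,q}$ with $|A_{p,q}'|\ge\mu$. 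Since the inner coordinate in the decomposition of $\cR^k(g)$ is, up to affine normalization, the original $\phi$ precomposed with the composition of the (inverted) rescalings, its domain of univalence in the normalized $[-1,1]$ coordinate grows like $\mu^{k-K}$; once it contains $N_r([-1,1])$ you are in $\mathbf A^\alpha(\mathbf{\Phi}_r)$. No complex bounds, no compactness, no quasiconformal rigidity --- only local analyticity of $g$ plus the expansion of the rescalings coming from real bounds.

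For part~(ii) your direct cocycle estimate on $\eta=\phi-\mathrm{id}$ is in the right spirit but is not a proof as stated. Writing $A=A_{p,q}$ and $B=A_{\phi^{-1}(p),\phi^{-1}(q)}$, a single step gives $\mathcal F(\phi)=A\circ B^{-1}+A'\cdot(\eta\circ B^{-1})$, where $A'$ is large (of order the reciprocal of the renormalization interval length) and $\eta\circ B^{-1}$ is not small. The exponential smallness of $\mathcal F_f^k(\phi)-\mathrm{id}$ relies on the large constant part of $A'\cdot(\eta\circ B^{-1})$ cancelling against $A\circ B^{-1}-\mathrm{id}$ (which it must, since $\mathcal F(\phi)$ fixes $\pm 1$), and that cancellation is precisely the bookkeeping you wave away as ``relatively routine.'' The paper bypasses this entirely: by the same expansion mechanism as in~(i), $\mathcal F_f^k(\phi)$ is defined and univalent on $N_{r\mu^{k-K}}([-1,1])$, and the Koebe distortion theorem forces a univalent map on such a rapidly growing domain, normalized to fix $\pm 1$, to be exponentially close to $\mathrm{id}$ on the fixed domain $N_r([-1,1])$. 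This one-line Koebe argument is both shorter and more robust than the step-by-step contraction estimate you propose.
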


As an immediate corollary of Theorem~\ref{Global_attractor_theorem_expanded} and Theorem~\ref{non_even_theorem}, we state the following rigidity result:
\begin{corollary}[{\bf $C^{1+\beta}$-rigidity}]\label{rigidity_corollary}
Let $\Theta\subset \mathbf P$ be a non-empty finite set. Then for every pair of maps $f, g\in\mathfrak U^\omega\cap\mathcal S_\Theta^\infty$ with $\rho(f)=\rho(g)$ and with the same critical exponent $\alpha\in\cup_{k\in\bbN}J(k,\Theta)$, 
there exists a $C^{1+\beta}$ diffeomorphism $h\colon\bbR\to\bbR$, 
that conjugates $f$ and $g$ on their corresponding attracting Cantor sets. The constant $\beta>0$ depends only on $\alpha$ and $\Theta$.
\end{corollary}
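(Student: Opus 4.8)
The plan is to derive Corollary~\ref{rigidity_corollary} by combining the horseshoe and exponential-convergence statements of Theorem~\ref{Global_attractor_theorem_expanded} with the reduction to even maps in Theorem~\ref{non_even_theorem}, and then to invoke the standard pullback argument that upgrades exponential convergence of renormalizations to $C^{1+\beta}$-rigidity on the Cantor attractor.

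First I would fix $f,g\in\mathfrak U^\omega\cap\mathcal S_\Theta^\infty$ with $\rho(f)=\rho(g)$ and common critical exponent $\alpha\in\cup_k J(k,\Theta)$, say $\alpha\in J(k,\Theta)$, and set $r=r(k)$. By Theorem~\ref{non_even_theorem}(i), after finitely many renormalizations both $\cR^m(f)$ and $\cR^m(g)$ lie in $\mathbf A^\alpha(\mathbf\Phi_r)$; since replacing $f$ by a high renormalization changes neither the combinatorial type nor the ($C^{1+\beta}$-)conjugacy class of the action on the Cantor attractor (the renormalization is an affine rescaling of an iterate, and the finitely many ``lost'' pieces of the Cantor set are handled by conjugating along $f^j$ on the intermediate intervals), it suffices to treat the case $f,g\in\mathbf A^\alpha(\mathbf\Phi_r)\cap\mathcal S_\Theta^\infty$. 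Write $f=\phi_f^{-1}\circ\hat f\circ\phi_f$ and $g=\phi_g^{-1}\circ\hat g\circ\phi_g$ with $\hat f,\hat g\in\mathbf A^\alpha$, $\phi_f,\phi_g\in\mathbf\Phi_r$. Here $\hat f,\hat g$ are even maps in $\mathbf A^\alpha$, with $\rho(\hat f)=\rho(\hat g)$ (conjugation by $\phi$ preserves the combinatorics). Now apply Theorem~\ref{Global_attractor_theorem_expanded}(i)–(ii): there is a unique $h_0\in\mathcal I_\Theta^\alpha$ with $\rho(h_0)=\rho(\hat f)=\rho(\hat g)$, and by part (ii),
$$
\dist_r(\cR^m(\hat f),\cR^m(h_0))\le C\lambda^m,\qquad \dist_r(\cR^m(\hat g),\cR^m(h_0))\le C\lambda^m,
$$
hence $\dist_r(\cR^m(\hat f),\cR^m(\hat g))\le 2C\lambda^m$.

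Next I would translate this into $C^{1+\beta}$-rigidity for $\hat f$ and $\hat g$. This is the classical argument of Sullivan--McMullen--de Melo--van Strien, so I would only sketch it. The Cantor attractor of $\hat f$ is organized by the nested sequence of renormalization intervals $J_m\supset f^{q_m}(J_m)\supset\cdots$; on each level-$m$ piece the first-return map, suitably rescaled by $A_{p_m,q_m}$, equals $\cR^m(\hat f)$, and likewise for $\hat g$. Define the combinatorial bijection $H$ between the Cantor sets by matching pieces of the same address; one checks $H$ extends to a homeomorphism because the geometry is bounded (real a priori bounds give a uniform lower bound on the ratios of adjacent pieces, coming from the fact that the orbit stays in a fixed compact family of combinatorial types). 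To get $C^{1+\beta}$, one compares the affine maps $A_{p_m^{\hat f},q_m^{\hat f}}$ and $A_{p_m^{\hat g},q_m^{\hat g}}$ and estimates the distortion of $H$ on a level-$m$ piece: the derivative of $H$ there is comparable to the ratio of the two rescaling factors, and the exponential closeness $\dist_r(\cR^m(\hat f),\cR^m(\hat g))\le 2C\lambda^m$ forces these factors to converge at a geometric rate, which yields that $\log H'$ is Hölder with some exponent $\beta=\beta(\alpha,\Theta)>0$ determined by $\lambda$ and the bounded geometry constants. Then one extends $H$ from the Cantor set to a $C^{1+\beta}$ diffeomorphism of $\bbR$ by a standard Whitney-type extension across the gaps, using that the complementary intervals also shrink geometrically. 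Since $H$ conjugates $\hat f$ to $\hat g$ on the attractors, and $f=\phi_f^{-1}\circ\hat f\circ\phi_f$, $g=\phi_g^{-1}\circ\hat g\circ\phi_g$ with $\phi_f,\phi_g$ analytic (hence $C^{1+\beta}$) diffeomorphisms near $[-1,1]$, the composition $h=\phi_g^{-1}\circ H\circ\phi_f$ is the desired $C^{1+\beta}$ conjugacy between $f$ and $g$ on their Cantor attractors; the uniformity $\beta=\beta(\alpha,\Theta)$ survives because $\phi_f,\phi_g$, though map-dependent, have uniformly controlled distortion by Theorem~\ref{non_even_theorem}(ii) (in fact $\mathcal F^k\to\mathrm{id}$), so after a further finite number of renormalizations they are arbitrarily close to the identity.

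The main obstacle I would anticipate is the last uniformity point: the conjugacies $\phi_f,\phi_g\in\mathbf\Phi_r$ produced by Theorem~\ref{non_even_theorem}(i) are not under a priori control as maps — only their renormalized versions $\mathcal F_f^k(\phi_f)$ converge to the identity — so some care is needed to ensure the Hölder exponent $\beta$ and constant do not degenerate with $f$. The clean way around this is to do the whole pullback construction at a renormalization depth $k$ large enough that $\|\mathcal F_f^k(\phi_f)-\mathrm{id}\|_r$ and $\|\mathcal F_g^k(\phi_g)-\mathrm{id}\|_r$ are both, say, $\le 1/2$ (possible by Theorem~\ref{non_even_theorem}(ii), with $k$ depending on $f,g$ but the resulting bound uniform), and to absorb the finitely many initial levels into the extension across gaps, where no exponential rate is needed. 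The remaining work — verifying bounded geometry of the Cantor attractor for the bounded-type family, and the distortion estimate relating $\dist_r(\cR^m\hat f,\cR^m\hat g)$ to $\log H'$ — is routine and identical to the analytic even case treated in de Melo--van Strien and McMullen, so I would cite those rather than reproduce them.
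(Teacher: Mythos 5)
Your proposal is correct and follows essentially the same route as the paper: reduce from $\mathfrak U^\omega$ to $\mathbf A^\alpha$ via Theorem~\ref{non_even_theorem}, obtain exponential convergence of renormalizations from Theorem~\ref{Global_attractor_theorem_expanded}(ii), and then invoke the standard ``exponential convergence of renormalizations implies $C^{1+\beta}$-rigidity of Cantor attractors'' result. The paper cites this last step directly as Theorem~9.4 of Chapter~VI of de Melo--van Strien (which is exactly the pullback/distortion argument you sketch), so the only difference is that you partially unpack the content of that cited theorem while the paper leaves it as a reference.
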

\begin{proof}
The corollary follows directly from Theorem~\ref{Global_attractor_theorem_expanded} and Theorem~\ref{non_even_theorem} together with Theorem~9.4 from Chapter~VI of~\cite{deMelo_vanStrien}.
\end{proof}





For ease of reference, let us quote a theorem from \cite{DeFaria_DeMelo_Pinto}
who state the 
Sullivan-McMullen-Lyubich renormalization hyperbolicity theorem for the case when the critical exponent is an even integer in a convenient for us form:

\begin{theorem}\label{renormalization_2k_theorem}
For every $k\in\bbN$ and a non-empty finite set $\Theta\subset\mathbf P$, there exist a positive real number $r=r(k)>0$, a positive integer $N=N(k)\in\bbN$, an open set $\mathcal O_{2k}=\mathcal O_{2k}(\Theta)\subset\mathfrak A_r^{2k}\cap\mathcal S_{\Theta}^N$ and an $\cR$-invariant compact set $\mathcal I_\Theta^{2k}\subset\mathcal O_{2k}\cap S_\Theta^\infty$, such that all properties from Theorem~\ref{Global_attractor_theorem_expanded} hold for $\alpha=2k$.
Furthermore, all maps from the image $\cR^N(\mathcal O_{2k})$ belong to $\mathfrak A_{2r}^{2k}$.
\end{theorem}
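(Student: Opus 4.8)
The plan is to deduce this statement from the Sullivan--McMullen--Lyubich theory as packaged in \cite{DeFaria_DeMelo_Pinto}, reconciling its normalizations with the affine Banach manifolds used here; the theorem is, after all, not new but a repackaging in our coordinates of renormalization hyperbolicity for unimodal maps of even critical order. Recall the three pillars that go into it: (1) Sullivan's real a priori bounds and the complex bounds they imply, so that the renormalizations $\cR^n(f)$ of a bounded-type infinitely renormalizable map of degree $2k$ extend, after finitely many steps, to polynomial-like maps of degree $2k$ with definite moduli, uniformly in $n$ and in $f\in\mathcal S_\Theta^\infty$; this yields precompactness of the renormalization limit set. (2) McMullen's tower/rigidity argument, which upgrades precompactness to exponential convergence: $\rho(f)=\rho(g)$ forces $\dist(\cR^n(f),\cR^n(g))\le C\lambda^n$. (3) Lyubich's construction of the full horseshoe $\mathcal I_\Theta^{2k}$, conjugate to the two-sided shift $\sigma$ on $\Theta^\bbZ$, together with his proof that $\cR^N$ is uniformly hyperbolic on it with a one-dimensional unstable direction, the unstable direction arising from the transversality of a one-parameter deformation to the stable lamination.

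Second, I would translate all of this into the affine Banach manifolds $\mathfrak A_r^{2k}$. By the complex bounds there is $N=N(k)$ so that, for every bounded-type $f$ of degree $2k$ and every $n\ge N$, the normalized renormalization $\cR^n(f)$ (obtained via the affine rescaling $A_{p,q}$ of Definition~\ref{renormalization_def}) extends analytically to a definite complex neighborhood of $[-1,1]$ and, since $2k$ is even so that its critical point is locally $x\mapsto x^{2k}$, admits a decomposition $\psi(-|x|^{2k})$ with $\psi$ univalent on a definite neighborhood $N_r([-1,0])$ and normalized by $\psi(-1)=-1$, $-1<\psi(0)<1$. Fixing $r=r(k)$ small enough, $\cR^n(f)\in\mathfrak A_r^{2k}$; in particular the horseshoe $\mathcal I_\Theta^{2k}$ lies in $\mathfrak A_r^{2k}$. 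Taking $\mathcal O_{2k}$ to be a small neighborhood of $\mathcal I_\Theta^{2k}$ in $\mathfrak A_r^{2k}\cap\mathcal S_\Theta^N$ and invoking the analyticity of renormalization as a Banach-space operator (part of the classical package, and also a special case of the general construction of this paper at $\alpha=2k$), every conclusion of Theorem~\ref{Global_attractor_theorem_expanded} is inherited at $\alpha=2k$.

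Finally, the extra assertion $\cR^N(\mathcal O_{2k})\subset\mathfrak A_{2r}^{2k}$ expresses that renormalization is analytically improving. By the complex bounds the polynomial-like extension of $\cR^N(f)$ has an outer annulus of modulus bounded below independently of $f$, so its univalent factor $\psi$ extends univalently to a neighborhood of $[-1,0]$ whose size is bounded below by the Koebe distortion theorem; shrinking the original $r$ once more makes that neighborhood contain $N_{2r}([-1,0])$, which is the claim. I expect the only genuine difficulty to be bookkeeping: matching the rescalings and domain conventions of \cite{DeFaria_DeMelo_Pinto} with the affine maps $A_{p,q}$, with the constraint $-1<\psi(0)<1$ defining $\mathfrak B_r$, and with the simultaneous choice of $r$ and $N$, so that the classical compact set and its neighborhood really do sit inside our manifolds with the asserted domain sizes, and so that ``hyperbolic for $\cR^N$'' in their formulation is literally ``uniformly hyperbolic with one-dimensional unstable direction'' in ours.
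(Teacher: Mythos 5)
The paper gives no proof of this theorem: it is stated as a direct quotation of the Sullivan--McMullen--Lyubich renormalization hyperbolicity result from \cite{DeFaria_DeMelo_Pinto}, already formulated there in the form used here (with Remark~\ref{r_small_remark} explicitly deferring to that reference). Your proposal correctly recognizes this and your elaboration of how the classical statement is repackaged into the $\mathfrak A_r^{2k}$ coordinates is consistent with the paper's (tacit) understanding, so you are taking essentially the same approach, simply spelled out more.
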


\begin{remark}\label{r_small_remark}
It follows from the proof of Theorem~\ref{renormalization_2k_theorem}, provided in~\cite{DeFaria_DeMelo_Pinto}, that the positive real number $r>0$ can be chosen arbitrarily small. In this case the positive integer $N$ and the set $\mathcal O_{2k}$ depend on $r$.
\end{remark}

\section{Proof of Theorem~\ref{Global_attractor_theorem_expanded}}

In this section we give a proof of Theorem~\ref{Global_attractor_theorem_expanded}. The proof is split into two lemmas. Roughly speaking, the first lemma proves property~(iii), and the second lemma proves properties (ii) and (i) of Theorem~\ref{Global_attractor_theorem_expanded}. The properties are proved precisely in the reverse order: (iii) $\implies$ (ii) $\implies$ (i). Before we formulate these lemmas, let us start with a definition:

\begin{definition}
For a positive real number $r>0$ and a set $I\subset(1,+\infty)$, let $\tilde{\mathfrak A}_r^I$ and $\mathfrak A_r^I$ be the disjoint unions $\tilde{\mathfrak A}_r^I\equiv\coprod_{\alpha\in I}\tilde{\mathfrak A}_r^\alpha$ and $\mathfrak A_r^I\equiv\coprod_{\alpha\in I}\mathfrak A_r^\alpha$.
Let $\tilde{\mathfrak A}^I$ and $\mathfrak A^I$ denote the spaces of all unimodal maps $f$, such that $f\in\tilde{\mathfrak A}_r^I$ and $f\in\mathfrak A_r^I$ respectively, for some $r>0$.
\end{definition}

If $I$ is an open set, then $\mathfrak A_r^I$ is a Banach manifold, diffeomorphic to $\mathfrak B_r\times I$. We extend the metric $\dist_r$ to $\tilde{\mathfrak A}_r^I$ in the following way: if $f_1, f_2\in\tilde{\mathfrak A}_r^I$ are two unimodal maps with critical exponents $\alpha_1$ and $\alpha_2$ respectively, such that $f_1=j_{\alpha_1}(\psi_1)$ and $f_2=j_{\alpha_2}(\psi_2)$, then
$$
\dist_r(f_1,f_2)=|\alpha_1-\alpha_2|+\sup_{z\in N_r([-1,0])}|\psi_1(z)-\psi_2(z)|.
$$

\begin{lemma}[Property (iii) of Theorem~\ref{Global_attractor_theorem_expanded}]\label{hyperb_lemma}
For every $k\in\bbN$ and a non-empty finite set $\Theta\subset\mathbf P$, there exist an open interval $J_1=J_1(k,\Theta)\subset\bbR$ containing the number $2k$, a positive real number $r=r(k)>0$ and a positive integer $N=N(k)\in\bbN$, such that for every $\alpha\in J_1$, 
there exist
an open set $\mathcal O_\alpha=\mathcal O_\alpha(\Theta)\subset\mathfrak A_r^\alpha\cap\mathcal S_{\Theta}^N$ 
and an $\cR^N$-invariant compact set $\mathcal I_\Theta^\alpha\subset\mathcal O_\alpha\cap S_\Theta^\infty$ that satisfies property (iii) of Theorem~\ref{Global_attractor_theorem_expanded}. The action of $\cR^N$ on ${\mathcal I}_\Theta^{\alpha}$ is topologically conjugate to the action of $\cR^N$ on $\mathcal I_\Theta^{2k}$ by a homeomorphism $h_\alpha\colon\mathcal I_\Theta^{2k}\to\mathcal I_\Theta^{\alpha}\subset\mathfrak A_r^{J_1}$ that continuously depends on $\alpha\in J_1$, and $h_{2k}=id$. 
\end{lemma}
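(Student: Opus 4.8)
The strategy is a structural-stability / persistence argument for the hyperbolic set $\mathcal I_\Theta^{2k}$ of $\cR^N$ under the perturbation parameter $\alpha$. The essential point is that everything in Theorem~\ref{renormalization_2k_theorem} is set up on fixed Banach balls of analytic maps on a fixed domain $N_r([-1,0])$, and the only place the exponent $\alpha$ enters is through the embedding $j_\alpha(\psi)(x)=\psi(-|x|^\alpha)$; so I first want to show that the composition operator underlying $\cR$, viewed as a map on the $\psi$-coordinates with $\alpha$ as an extra parameter, extends analytically (or at least is uniformly $C^1$) in $\alpha$ near $\alpha=2k$ on a neighbourhood of the relevant maps.

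\medskip

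First I would fix $k$ and $\Theta$ and invoke Theorem~\ref{renormalization_2k_theorem} together with Remark~\ref{r_small_remark}: choose $r=r(k)$ small enough, and the integer $N=N(k)$ and the open set $\mathcal O_{2k}$, so that $\cR^N(\mathcal O_{2k})\subset\mathfrak A_{2r}^{2k}$ and $\mathcal I_\Theta^{2k}\subset\mathcal O_{2k}$ is a locally maximal uniformly hyperbolic set for the analytic operator $\cR^N$ with one-dimensional unstable direction. The key analytic input I then need is: \emph{the map $(\alpha,f)\mapsto\cR^N(f)$ is defined and analytic on an open neighbourhood of $\{2k\}\times\mathcal O_{2k}$ inside $\mathfrak A_r^{J_0}$ for some open interval $J_0\ni 2k$, and its restriction to $\alpha=2k$ coincides with the operator of Theorem~\ref{renormalization_2k_theorem}}. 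To see this, write a point of $\mathfrak A_r^\alpha$ as $j_\alpha(\psi)$; since the renormalization is the affinely rescaled $m$-fold composition $A_{p,q}\circ f^m\circ A_{p,q}^{-1}$ with $m$ bounded on $\mathcal S_\Theta^N$, and since the only non-analytic ingredient $x\mapsto |x|^\alpha$ appears exactly once (at the innermost application, near the critical point), one checks that in the $\psi$-coordinate $\cR(j_\alpha(\psi))=j_\alpha(\hat\psi)$ where $\hat\psi$ depends analytically on $\psi$ and on $\alpha$ on the slightly smaller domain guaranteed by $\cR^N(\mathcal O_{2k})\subset\mathfrak A_{2r}^{2k}$ — the extra room in $2r$ vs $r$ absorbs the continuity of the domain of definition as $\alpha$ varies. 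This is the step where the hypothesis ``$r$ arbitrarily small'' and the factor-of-two margin are used, and it is also where I expect the main technical work to lie: proving analyticity (or, if one prefers, uniform $C^1$-dependence, which suffices) of $\hat\psi$ jointly in $(\alpha,\psi)$ and that the combinatorics $\theta(\cR^j)$ are locally constant, so that $\mathcal S_\Theta^N$ contains a full neighbourhood of each point of $\mathcal I_\Theta^{2k}$ for all $\alpha$ near $2k$.

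\medskip

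Once this parametrized analytic family $\cR^N_\alpha:=\cR^N|_{\mathfrak A_r^\alpha}$ is in hand, the conclusion is soft. Uniformly hyperbolic locally maximal sets persist under $C^1$-small perturbations of the map: there is a standard graph-transform / hyperbolic-continuation theorem (applicable in Banach manifolds, see e.g. the references in \cite{DeFaria_DeMelo_Pinto} for the very same operator) giving, for $\alpha$ in a possibly smaller open interval $J_1\subset J_0$ containing $2k$, a compact $\cR^N_\alpha$-invariant hyperbolic set $\mathcal I_\Theta^\alpha$ and a homeomorphism $h_\alpha\colon\mathcal I_\Theta^{2k}\to\mathcal I_\Theta^\alpha$ conjugating $\cR^N$ to $\cR^N_\alpha$, with $h_\alpha$ depending continuously on $\alpha$ (in the sup-metric $\dist_r$ on $\mathfrak A_r^{J_1}$) and $h_{2k}=\mathrm{id}$; the unstable direction stays one-dimensional and $\mathcal I_\Theta^\alpha$ stays locally maximal. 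I would define $\mathcal O_\alpha$ as the connected component of the $\cR^N_\alpha$-preimage neighbourhood containing $\mathcal I_\Theta^\alpha$ inside $\mathcal O_{2k}$ (shrunk so that $\cR^N_\alpha(\mathcal O_\alpha)\subset\mathfrak A_r^\alpha$ and $\mathcal O_\alpha\subset\mathcal S_\Theta^N$, both open conditions that persist), which gives exactly the open set and $\cR^N$-invariant hyperbolic set required, together with the stated topological conjugacy of the $\cR^N$-actions and continuous dependence of $h_\alpha$. Finally one records that $\mathcal I_\Theta^\alpha\subset\mathcal O_\alpha\cap S_\Theta^\infty$ because points of the invariant set have all forward and backward renormalizations well-defined with combinatorics in $\Theta$, completing the verification of property (iii).
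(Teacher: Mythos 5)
Your proposal is correct and takes essentially the same approach as the paper: work in the $\psi$-coordinate (i.e.\ identify $\mathfrak A_r^\alpha$ with $\mathfrak B_r$ via $j_\alpha$), observe that the operators $\cR_\alpha = j_\alpha^{-1}\circ\cR^N\circ j_\alpha$ are real-analytic on a fixed open set $\mathcal U\subset\mathfrak B_r$ and depend analytically on $\alpha$ (using the factor-of-two margin $\cR^N(\mathcal O_{2k})\subset\mathfrak A_{2r}^{2k}$ and finiteness of $\Theta$ to keep the domain of definition uniform), and then invoke structural stability of uniformly hyperbolic sets to continue $\hat{\mathcal I}_\Theta^{2k}$ and the conjugacy for $\alpha$ in an interval $J_1\ni 2k$. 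One small note: Remark~\ref{r_small_remark} (that $r$ can be taken arbitrarily small) is not actually needed in this lemma — it is only used later, to ensure $r(k)\le s(\mu_k/2)$ in the complex-bounds argument — but invoking it here is harmless.
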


\begin{lemma}[Properties (i) and (ii) of Theorem~\ref{Global_attractor_theorem_expanded}]\label{global_lemma}
For every $k\in\bbN$ and a non-empty finite set $\Theta\subset\mathbf P$, let $J_1$, $r$ and the sets $\mathcal I_\Theta^\alpha$, where $\alpha\in J_1$, be the same as in Lemma~\ref{hyperb_lemma}. Then there exists an open interval $J\subset J_1$ containing the number $2k$, such that for every $\alpha\in J$, properties~(i) and~(ii) of Theorem~\ref{Global_attractor_theorem_expanded} hold.
\end{lemma}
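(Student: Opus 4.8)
\textbf{Proof proposal for Lemma~\ref{global_lemma}.}

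The plan is to bootstrap from the local hyperbolicity already established in Lemma~\ref{hyperb_lemma} to the global statements (i) and (ii), following the logical order (iii) $\implies$ (ii) $\implies$ (i) announced in the text. First I would fix $k$ and $\Theta$, work inside the interval $J_1$ and the scale $r$ from Lemma~\ref{hyperb_lemma}, and recall from Theorem~\ref{renormalization_2k_theorem} (together with Remark~\ref{r_small_remark}) that at the even-integer value $\alpha=2k$ the full global picture holds: every $f\in\mathbf A^{2k}\cap S_\Theta^\infty$ is eventually attracted under $\cR$ to the horseshoe $\mathcal I_\Theta^{2k}$, with exponential convergence to the unique point of matching combinatorics, and all properties (i)–(iii) are in force. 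The heart of the argument is to transport this to nearby $\alpha$.

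For property (ii), I would argue as follows. Given $\alpha\in J_1$ and $f\in\mathbf A^\alpha\cap S_\Theta^\infty$, first choose $M_0$ so large that $\cR^{M_0}(f)\in\mathfrak A_r^\alpha$ (this is a compactness/real-bounds statement: renormalizations of an infinitely renormalizable analytic map of bounded type eventually lie in a fixed Banach ball of definite size — cf.\ real a priori bounds in~\cite{deMelo_vanStrien}, exactly as used for Theorem~\ref{non_even_theorem}). Once inside $\mathfrak A_r^\alpha$, I want to show the forward orbit under $\cR^N$ enters a fixed neighborhood of the hyperbolic set $\mathcal I_\Theta^\alpha$. The clean way is a continuity/structural-stability argument in the combined parameter-and-phase Banach manifold $\mathfrak A_r^{J_1}$: the operator $\cR^N$ and the hyperbolic set $\mathcal I_\Theta^\alpha$ depend continuously on $\alpha$ by Lemma~\ref{hyperb_lemma}, and at $\alpha=2k$ we already know, by Theorem~\ref{renormalization_2k_theorem}, that a definite neighborhood $\cU_{2k}$ of $\mathcal I_\Theta^{2k}$ in $\mathfrak A_r^{2k}$ is swept into the local product neighborhood on which hyperbolicity of $\cR^N$ controls dynamics. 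Shrinking $J_1$ to a subinterval $J\ni 2k$, this persists: there is a fixed neighborhood $\cU_\alpha\supset\mathcal I_\Theta^\alpha$ in $\mathfrak A_r^\alpha$ such that every point of $\mathfrak A_r^\alpha\cap S_\Theta^\infty$ whose combinatorics lie in $\Theta$ lands in $\cU_\alpha$ after finitely many applications of $\cR^N$ — here one uses that the \emph{combinatorial} attractor (the set of maps with all renormalization combinatorics in $\Theta$) is closed and, by the real bounds, precompact, so a finite subcover argument plus continuity in $\alpha$ gives a uniform entry time. Once in $\cU_\alpha$, the stable-manifold theory for the uniformly hyperbolic $\cR^N$ on $\mathcal I_\Theta^\alpha$ (a standard consequence of property (iii), e.g.\ via the local stable manifolds through each point of $\mathcal I_\Theta^\alpha$ forming a lamination whose union contains $\cU_\alpha\cap S_\Theta^\infty$) yields: $\cR^{Nm}(f)$ converges exponentially, in $\dist_r$, to the unique $g_0\in\mathcal I_\Theta^\alpha$ with $\rho(g_0)=\rho(f)$, with rate $\lambda_0<1$ and constant depending only on $\Theta$ and $\alpha$. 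Interpolating over the $N$ residues $j=0,\dots,N-1$ (the intermediate iterates $\cR^{Nm+j}(f)$ stay in $\mathfrak A_r^\alpha$ and are uniformly Lipschitz images) upgrades this to the claimed bound $\dist_r(\cR^m(f),\cR^m(g))\le C\lambda^m$ for every $g\in\mathcal I_\Theta^\alpha$ with $\rho(f)=\rho(g)$ — noting such $g$ is unique, being a point of the horseshoe.

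Property (i) is then essentially formal given (iii) and the identification of combinatorics. On $\mathcal I_\Theta^\alpha$ the map $\cR^N$ is topologically conjugate, via $h_\alpha\circ(\text{conjugacy at }2k)$, to the one-sided shift $\Theta^\bbN\to\Theta^\bbN$; promoting to the two-sided shift $\sigma\colon\Theta^\bbZ\to\Theta^\bbZ$ uses that $\mathcal I_\Theta^\alpha$ is $\cR$-invariant (not merely $\cR^N$-invariant) — this is where one must check $\cR$ itself acts on $\mathcal I_\Theta^\alpha$ and that the period-$N$ structure assembles the $N$ pieces into a single $\cR$-horseshoe, exactly as at $\alpha=2k$ in Theorem~\ref{renormalization_2k_theorem}; one transports this via $h_\alpha$ and continuity. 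The intertwining map $\iota_\alpha$ is built by composing the conjugacy at $2k$ with $h_\alpha^{-1}$, and the formula $\rho(\iota_\alpha^{-1}(\dots,\theta_0,\theta_1,\dots))=[\theta_0,\theta_1,\dots]$ holds because $h_\alpha$ preserves combinatorics (the renormalization combinatorics $\theta(\cR^j(\cdot))$ are locally constant, hence constant along the continuous family $\alpha\mapsto h_\alpha(p)$ connecting any $p\in\mathcal I_\Theta^{2k}$ to its image).

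\textbf{Main obstacle.} The delicate point is the \emph{uniform entry time} into $\cU_\alpha$: one needs that \emph{every} map in $\mathbf A^\alpha\cap S_\Theta^\infty$, after passing to $\mathfrak A_r^\alpha$, reaches the hyperbolic neighborhood in a number of steps bounded independently of the map (and, after shrinking $J$, independently of $\alpha$). This is the genuine ``global'' content and cannot come from local hyperbolicity alone; it rests on precompactness of the combinatorial attractor in $\mathfrak A_r^\alpha$ (real a priori bounds for bounded type, giving a fixed definite Banach domain and relative compactness) combined with the fact that at $\alpha=2k$ the full basin structure is known. Making the finite-subcover-plus-continuity argument rigorous — in particular ruling out that the entry time blows up as $\alpha\to 2k$ — is the step I expect to require the most care, and is presumably why the authors isolate (ii)–(i) into a separate lemma after (iii).
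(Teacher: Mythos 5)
Your proposal follows essentially the same architecture as the paper: use the hyperbolic set from Lemma~\ref{hyperb_lemma} to get an isolating neighborhood $\mathcal O$, use a priori bounds to force renormalizations of any $f\in\mathbf A^\alpha\cap\mathcal S_\Theta^\infty$ into a compact family of ``geometrically bounded'' maps, and then use the known global basin structure at $\alpha=2k$ plus a compactness--continuity argument to obtain a uniform number of further iterates $L$ that lands this compact family inside $\mathcal O$, after which hyperbolicity takes over. The paper packages this last step as Proposition~\ref{second_step_conv_proposition}, which is exactly your ``finite-subcover-plus-continuity'' device, and you correctly flag it as the heart of the matter.

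A few corrections to how you would make this precise. First, you repeatedly attribute the needed precompactness to ``real a priori bounds, exactly as used for Theorem~\ref{non_even_theorem}''. Real bounds alone give geometric control on $\RR$ but do \emph{not} give precompactness in the analytic Banach topology $\dist_r$; for that you need \emph{complex} a priori bounds (Theorem~\ref{complex bounds_extended}), which force renormalizations into the family $\mathbf H^I(\mu)$ where Lemma~\ref{definite_r_lemma} and Lemma~\ref{H_mu_compact_lemma} (via McMullen's Carath\'eodory compactness) deliver sequential compactness in $\dist_r$. Without this you cannot close the continuity argument. Second, your formulation of a uniform entry time ``after passing to $\mathfrak A_r^\alpha$'' is slightly off: being in $\mathfrak A_r^\alpha$ is not a compact condition, so the entry time into $\mathcal O$ from there is not bounded. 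The correct two-stage structure is that each $f$ takes an $f$-dependent number of steps $K(f)$ to enter the compact family $\mathbf H^J(\mu_k)\cap\hat{\mathcal S}_\Theta^\infty$, and only from that compact family is the further entry time $L$ into $\mathcal O$ uniform; this matches the statement of (ii), where $M=M(f)$ is allowed to depend on $f$. Third, the $g\in\mathcal I_\Theta^\alpha$ with $\rho(g)=\rho(f)$ is not unique: $\rho$ reads only the forward symbol sequence, and the horseshoe is parametrized by $\Theta^\bbZ$, so there is a Cantor set of admissible $g$ (a stable leaf); the paper handles this by observing that all of them lie in the same stable manifold, so the exponential convergence holds for each. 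Finally, the conjugacy in (iii) is already to the \emph{two-sided} shift $\sigma^N$; there is no ``promotion from one-sided''. The paper's way of concluding (i) is to note that $\iota_\alpha\circ\cR\circ\iota_\alpha^{-1}$ varies continuously in $\alpha$ with values in a totally disconnected space and hence equals its value $\sigma$ at $\alpha=2k$; your local-constancy-of-combinatorics argument is a valid alternative, but it requires first establishing $\cR$-invariance of $\mathcal I_\Theta^\alpha$, which the paper deduces from uniformity of the convergence in (ii) over the compact set $\cR(\mathcal I_\Theta^\alpha)$.
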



\subsection{Extending hyperbolicity}

First, we prove property (iii) of Theorem~\ref{Global_attractor_theorem_expanded}.



\begin{proof}[Proof of Lemma~\ref{hyperb_lemma}]
Fix $k\in\bbN$ and a finite non-empty set $\Theta\subset\mathfrak P$. Let the constants $r>0$ and $N\in\bbN$ as well as the sets $\mathcal O_{2k}$ and $\mathcal I_\Theta^{2k}$ be the same as in Theorem~\ref{renormalization_2k_theorem}.

Define the set $\hat{\mathcal I}_\Theta^{2k}=j_{2k}^{-1}(\mathcal I_\Theta^{2k})$. 
Let $I\subset\bbR$ be an open interval, such that $2k\in I$. Then from boundedness of combinatorics (finiteness of $\Theta$) and continuity arguments it follows that there exists an open set $\mathcal O\subset \mathfrak A_r^I\cap\mathcal S_\Theta^N$, such that $\mathcal O_{2k}\subset \mathcal O$ and $\cR^N(\mathcal O)\subset \mathfrak A_{3r/2}^I\subset \mathfrak A_{r}^I$. The operator $\cR^N\colon\mathcal O\to\mathfrak A_r^I$ is real-analytic, since it is a rescaling of a finite composition, and the rescaling depends analytically on the map. 

Let $\tilde I\subset I$ be an open interval, such that for any $\alpha\in \tilde I$, the set $\mathcal O_\alpha\equiv\mathcal O\cap\mathfrak A_r^\alpha$ is non-empty, and there exists an open set $\mathcal U\subset\mathfrak B_r$, such that for all $\alpha\in \tilde I$, the operators $\cR_\alpha\equiv j_\alpha^{-1}\circ\cR^N\circ j_\alpha$ are defined in $\mathcal U$, the image $\cR_\alpha(\mathcal U)$ is contained in $\mathfrak B_r$ and $\hat{\mathcal I}_\Theta^{2k}\subset\mathcal U$. Clearly, the operators $\cR_\alpha\colon\mathcal U\to\mathfrak B_r$ are real-analytic and analytically depend on $\alpha\in \tilde I$. 

It follows from Theorem~\ref{renormalization_2k_theorem} that the set $\hat{\mathcal I}_\Theta^{2k}\subset\mathfrak B_r$ is invariant and uniformly hyperbolic for the operator $\cR_{2k}$ with a one-dimensional unstable direction. Furthermore, the action of $\cR_{2k}$ on $\hat{\mathcal I}_\Theta^{2k}$ is topologically conjugate to the two-sided shift $\sigma^N$ on $\Theta^\bbZ$. Now it follows from the theorem on structural stability of hyperbolic sets that there exists an open interval $J_1\subset \tilde I$, such that $2k\in J_1$, and for every $\alpha\in J_1$, the operator $\cR_\alpha$ has an invariant uniformly hyperbolic set $\hat{\mathcal I}_\Theta^{\alpha}\subset\mathcal U$ with a one-dimensional unstable direction. Furthermore, the action of $\cR_{\alpha}$ on $\hat{\mathcal I}_\Theta^{\alpha}$ is topologically conjugate to the two-sided shift $\sigma^N$ on $\Theta^\bbZ$. Finally, for each $\alpha\in J_1$ we define $\mathcal I_\Theta^\alpha\equiv j_\alpha(\hat{\mathcal I}_\Theta^\alpha)$, which completes the proof.
\end{proof}


\subsection{Complex bounds}

For a set $S\subset\bbC$, by $-S\subset\bbC$ we denote the reflection of $S$ about the origin. In other words,
$$
-S=\{z\in\bbC\mid -z\in S\}.
$$
For $\alpha\in(1,+\infty)$, let $p_\alpha\colon\bbC\setminus(0,+\infty)\to\bbC$ be the branch of the map $z\mapsto -(-z)^\alpha$, such that $p_\alpha((-\infty,0])=(-\infty,0]$.

\begin{definition}
For a simply connected domain $U\subset\bbC$ and a set $X\Subset U$, let
$
\mod(X,U)
$
denote the supremum of the moduli of all annuli $A\subset U\setminus\overline X$ that separate $\partial U$ from $\overline X$.
\end{definition}

\begin{definition}\label{H_mu_def}
For a set $I\subset(1,+\infty)$ and a real number $\mu\in(0,1)$, let $\mathbf H^I(\mu)\subset\tilde{\mathfrak A}^I$ be the set of all unimodal maps $f\in\tilde{\mathfrak A}^I$ of the form $f(x)=\psi(-|x|^\alpha)$, where $\alpha\in I$, and $\psi$ is a univalent analytic map of some simply connected neighborhood $U_f\subset\bbC$ of the interval $[-1,0]$, such that
\begin{enumerate}[(i)]
\item $\diam (\psi(U_f))< 1/\mu$;
\item the neighborhood $V_f=p_\alpha^{-1}(U_f)\cup -p_\alpha^{-1}(U_f)$ is compactly contained in $\psi(U_f)$, and $\mod(V_f,\psi(U_f))>\mu$.

\end{enumerate}






\end{definition}

\begin{lemma}\label{definite_r_lemma}
For a real number $\mu\in(0,1)$, there exists a positive real number $s=s(\mu)>0$, such that for every $I\subset(1,+\infty)$ and every $f\in\mathbf H^I(\mu)$ with critical exponent $\alpha$, the map $\psi=j_\alpha^{-1}(f)$ belongs to $\tilde{\mathfrak B}_s$ and is defined and univalent in $N_{2s}([-1,0])$. Furthermore, the inclusion 
\begin{equation}\label{psi_Uf_inclusion_eq}
N_s([-1,1])\subset\psi(U_f)
\end{equation}
holds.
\end{lemma}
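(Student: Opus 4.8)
\textbf{Proof proposal for Lemma~\ref{definite_r_lemma}.}

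The plan is to extract a definite-size round neighborhood of $[-1,0]$ on which $\psi$ is univalent, purely from the two conditions in Definition~\ref{H_mu_def}, using the Koebe distortion theorem together with the Gr\"otzsch-type estimate that a big modulus forces a definite ``buffer'' between $V_f$ and $\partial(\psi(U_f))$. First I would normalize the picture: since $f\in\mathbf H^I(\mu)$, we have $f(x)=\psi(-|x|^\alpha)$ with $\psi\colon U_f\to\bbC$ univalent, $\psi(-1)=-1$, $\psi(0)=f(0)\in(-1,1]$, and the interval $[-1,0]\subset U_f$. The key geometric input is condition~(ii): $V_f=p_\alpha^{-1}(U_f)\cup(-p_\alpha^{-1}(U_f))$ satisfies $\mod(V_f,\psi(U_f))>\mu$. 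Because $[-1,0]\subset U_f$ and $p_\alpha$ maps $(-\infty,0]$ to $(-\infty,0]$ with $p_\alpha(-1)=-1$, the preimage $p_\alpha^{-1}(U_f)$ contains an interval around $[-1,0]$ in the left half-line, so $V_f$ contains a neighborhood of $[-1,1]$ (the union with its reflection covers $[0,1]$ as well, at least near the relevant endpoints — one has to check that $p_\alpha^{-1}([-1,0])=[-1,0]$ and hence $-p_\alpha^{-1}([-1,0])=[0,1]$, so $[-1,1]\subset V_f$). Thus we have an annulus of modulus $>\mu$ inside $\psi(U_f)$ separating $[-1,1]$ from $\partial(\psi(U_f))$, and combined with $\diam(\psi(U_f))<1/\mu$ this gives, by the standard estimate relating modulus, diameter and Euclidean distance (e.g. the round-annulus lower bound for the modulus of a separating annulus), a constant $\delta=\delta(\mu)>0$ with $N_{2\delta}([-1,1])\subset\psi(U_f)$. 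This already establishes~(\ref{psi_Uf_inclusion_eq}) with $s\le\delta$.

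Next I would transfer this to a univalence domain for $\psi=j_\alpha^{-1}(f)$ via the change of variables $w=p_\alpha(z)$. Writing $\psi=j_\alpha^{-1}(f)$ means $\psi\circ p_\alpha = f$ on the relevant domain in the left half-plane; more precisely, on $p_\alpha^{-1}(U_f)$ the composition $\psi\circ p_\alpha$ is defined and equals the analytic continuation of $f$, and $\psi$ itself (as a function of the variable $-|x|^\alpha$, i.e.\ of $w=p_\alpha(x)$) is univalent on $U_f$ by hypothesis. So the only real work is to produce a \emph{round} neighborhood $N_{2s}([-1,0])$ contained in $U_f$ on which $\psi$ is univalent with the claimed size bound. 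For this I use condition~(ii) again in the $w$-coordinate: $p_\alpha^{-1}(U_f)$ has modulus-$>\mu$ annulus around it inside... — more directly, since $\mod(V_f,\psi(U_f))>\mu$ and $p_\alpha$ restricted to a neighborhood of $(-\infty,0)$ is conformal, pulling back the separating annulus under $\psi^{-1}\circ(\text{nothing})$... The cleanest route: $\psi$ is univalent on $U_f$ and $U_f\supset p_\alpha^{-1}(\text{something of definite modulus around }[-1,0])$; one shows $p_\alpha^{-1}(N_{c}([-1,0]))\supset N_{c'}([-1,0])$ for comparable constants near $[-1,0]$ (where $p_\alpha$ has bounded distortion away from $0$ and is a definite power map near $0$), and that the image $\psi(U_f)$ containing an annulus of modulus $>\mu$ around $V_f\supset[-1,1]$, together with $\psi(-1)=-1$, pins down both that $U_f\supset N_{2s}([-1,0])$ and, by Koebe applied to the univalent $\psi$ on this round disk-union, that $\psi$ does not contract or expand too much, so $\psi\in\tilde{\mathfrak B}_s$ (the conditions $\psi(-1)=-1$, $-1<\psi(0)\le1$ are inherited from $f\in\tilde{\mathfrak A}^I$).

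The main obstacle — and the place requiring genuine care rather than a one-line citation — is the \emph{uniformity in $\alpha$}: the constant $s=s(\mu)$ must not depend on $I$ or on $\alpha\in I$. The map $p_\alpha(z)=-(-z)^\alpha$ has distortion near $0$ that degenerates as $\alpha\to 1^+$ or $\alpha\to\infty$, so when I pull the modulus-$\mu$ annulus and the diameter bound back through $p_\alpha^{-1}$, I must be careful that the resulting round neighborhood of $[-1,0]$ has radius bounded below independently of $\alpha$. The way around this: the modulus condition~(ii) and diameter condition~(i) are imposed \emph{after} applying $p_\alpha$, i.e.\ directly on $V_f$ and $\psi(U_f)$ which live in the $z$- and $w$-planes without reference to the power, and the interval $[-1,1]$ (respectively $[-1,0]$) is fixed; so the extremal-length estimate ``annulus of modulus $>\mu$ separating a set of diameter $\ge 2$ from a set of diameter $<1/\mu$'' yields a purely $\mu$-dependent gap, and then $p_\alpha^{-1}$ only needs to be controlled on the \emph{already definite-size} region $N_{2\delta}([-1,0])$, bounded away from the singular point $0$ except on $[-\delta,0]$ itself, where $p_\alpha^{-1}(w)=-(-w)^{1/\alpha}$ maps $[-\delta,0]$ into $[-\delta^{1/\alpha},0]\supset[-\delta,0]$ for $\delta<1$, actually \emph{enlarging} toward $0$ — so monotonicity saves us and no lower bound on $\alpha-1$ is needed. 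I would double the constant once at the end (taking $s=\delta/2$, say) to absorb the requirement that $\psi$ be univalent on the larger neighborhood $N_{2s}([-1,0])$, which follows since $N_{2s}([-1,0])\subset p_\alpha^{-1}(N_{2\delta}([-1,1]))\cap p_\alpha^{-1}(\psi(U_f))^{\text{pre}}\subset U_f$ where $\psi$ was assumed univalent.
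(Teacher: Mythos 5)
Your first step is correct and matches the paper: from $[-1,1]\subset V_f$, $\mod(V_f,\psi(U_f))>\mu$, and the bounded diameter of $\psi(U_f)$, one obtains a definite round neighborhood $N_{s}([-1,1])\subset\psi(U_f)$, giving~(\ref{psi_Uf_inclusion_eq}). (In fact, since $[-1,1]$ has a fixed diameter, the modulus bound alone already forces a definite gap between $[-1,1]$ and $\partial\psi(U_f)$ via the standard Gr\"otzsch-type estimate — the paper cites Proposition~4.8 of McMullen for exactly this — so the diameter hypothesis~(i) is not actually needed here; it plays its role later, in the compactness argument of Lemma~\ref{H_mu_compact_lemma}.)

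The second half — producing $N_{2s}([-1,0])\subset U_f$ — is where your argument has a genuine gap, and indeed goes in the wrong direction. You try to transport the modulus bound from the $\psi(U_f)$-plane to the $U_f$-plane through $p_\alpha^{-1}$, but the map $p_\alpha$ does not relate these two planes at all: $p_\alpha^{-1}$ sends $U_f$ (the domain of $\psi$) to the left half of $V_f$ (the domain of $f$), which sits in the \emph{same} plane as $\psi(U_f)$. Your final inclusion chain $N_{2s}([-1,0])\subset p_\alpha^{-1}(N_{2\delta}([-1,1]))\cap\cdots\subset U_f$ therefore lands in the wrong plane and does not parse. Moreover, your (correct) worry about the $\alpha$-degeneracy of $p_\alpha$ near the critical point is a symptom of the approach being off: any argument that routes the estimate through $p_\alpha^{-1}$ will have to fight exactly this non-uniformity.

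The clean route, and the one the paper takes, uses $\psi^{-1}$ instead of $p_\alpha^{-1}$, together with conformal invariance of modulus, and it never mentions $\alpha$. Since $\psi([-1,0])\subset[-1,1]\subset V_f$, one has $[-1,0]\subset\psi^{-1}(V_f)$. Because $\psi$ is a conformal isomorphism from $U_f$ onto $\psi(U_f)$,
$$
\mod\bigl([-1,0],U_f\bigr)\ \ge\ \mod\bigl(\psi^{-1}(V_f),U_f\bigr)\ =\ \mod\bigl(V_f,\psi(U_f)\bigr)\ >\ \mu,
$$
and the same Gr\"otzsch/McMullen estimate as before, applied now to the fixed compact $[-1,0]$, produces $N_{2s}([-1,0])\subset U_f$ with $s=s(\mu)$. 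Univalence of $\psi$ there is automatic since $N_{2s}([-1,0])\subset U_f$ and $\psi$ is univalent on $U_f$; the normalizations $\psi(-1)=-1$, $-1<\psi(0)\le 1$ come, as you say, from $f\in\tilde{\mathfrak A}^I$. This is the missing step in your proposal; once replaced, the rest of your outline is sound.
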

\begin{proof}
Since $p_\alpha([-1,0])=[-1,0]$, the neighborhood $p_\alpha^{-1}(U_f)$ contains the interval $[-1,0]$, which implies that $[-1,1]\subset V_f$. 
According to the definition of the space $\tilde{\mathfrak B}_s$, we have $\psi([-1,0])\subset[-1,1]$, so $[-1,0]\subset\psi^{-1}(V_f)$. From this we conclude that
$$
\mod([-1,1], \psi(U_f))\ge \mod(V_f,\psi(U_f))>\mu, 
$$
and
$$
\mod([-1,0], U_f)\ge \mod(\psi^{-1}(V_f), U_f)=\mod(V_f,\psi(U_f))>\mu.
$$
Finally, it follows from Proposition~4.8 of~\cite{McM-ren2} that the domains $U_f$ and $\psi(U_f)$ contain neighborhoods $N_{2s}([-1,0])$ and $N_{s}([-1,1])$ respectively, for some $s$ that depends only on~$\mu$.
\end{proof}




Assume that a set $A\subset\tilde{\mathfrak A}^{(1,+\infty)}$ is contained and is relatively compact in $\tilde{\mathfrak A}_r^{(1,+\infty)}$, for some $r>0$. Then we let $\overline{A}^r\subset\tilde{\mathfrak A}_r^{(1,+\infty)}$ denote the closure of $A$ with respect to the $\dist_r$-metric.

\begin{lemma}\label{H_mu_compact_lemma}
For a bounded set $I\subset(1,+\infty)$ and a real number $\mu\in(0,1)$, let $s\in\bbR$ be such that $0<s\le s(\mu)$, where $s(\mu)$ is the same as in Lemma~\ref{definite_r_lemma}. Then the set $\mathbf H^I(\mu)$ is relatively compact in $\tilde{\mathfrak A}_s^{\overline I}$, and 
if a map $f\in \overline{\mathbf H^I(\mu)}^s$ has critical exponent $\alpha$, then $\alpha\in \overline I$, and $f\in{\mathbf H^{\{\alpha\}}(\mu/2)}$.
\end{lemma}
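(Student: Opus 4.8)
The plan is to exploit the definite moduli in Definition~\ref{H_mu_def} as a compactness mechanism, in the spirit of the standard complex-bounds arguments. Fix $\mu\in(0,1)$, the bounded set $I$, and $0<s\le s(\mu)$. By Lemma~\ref{definite_r_lemma}, for every $f\in\mathbf H^I(\mu)$ with critical exponent $\alpha\in I$, the map $\psi_f=j_\alpha^{-1}(f)$ lies in $\tilde{\mathfrak B}_s$ and is univalent on $N_{2s}([-1,0])$, and $N_s([-1,1])\subset\psi_f(U_f)$. First I would check relative compactness: the family $\{\psi_f\}$ is uniformly bounded on $N_{s}([-1,0])$ (indeed on $N_{2s}([-1,0])$) because each $\psi_f$ maps into $\psi_f(U_f)$ which has diameter $<1/\mu$ and contains the point $-1$; hence the values stay in a fixed disk of radius $1/\mu+1$ around $-1$. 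By Montel's theorem the family is precompact in $\mathcal B^{\bbR}(N_s([-1,0]))$, i.e. in the $\dist_s$-metric after accounting for the exponent; and since $I$ is bounded, $\alpha$ ranges in a precompact subset of $(1,+\infty)$. Together these give relative compactness of $\mathbf H^I(\mu)$ in $\tilde{\mathfrak A}_s^{\overline I}$.

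Next I would take a limit point $f\in\overline{\mathbf H^I(\mu)}^s$, say $f=\lim f_n$ with $f_n\in\mathbf H^I(\mu)$ of exponents $\alpha_n$. Passing to a subsequence, $\alpha_n\to\alpha$ and, since $I$ is bounded, $\alpha\in\overline I$; also $\psi_{f_n}\to\psi_f$ uniformly on $N_s([-1,0])$, so $f=j_\alpha(\psi_f)$. It remains to produce a univalent extension $\psi$ of $\psi_f$ to a simply connected neighborhood $U_f$ realizing properties (i) and (ii) with $\mu$ replaced by $\mu/2$. For the domains: the key step is to extract a limiting domain. The domains $\psi_{f_n}(U_{f_n})$ are simply connected, contain $N_s([-1,1])$, have diameter $<1/\mu$, and satisfy $\mod(V_{f_n},\psi_{f_n}(U_{f_n}))>\mu$, where $V_{f_n}=p_{\alpha_n}^{-1}(U_{f_n})\cup -p_{\alpha_n}^{-1}(U_{f_n})$. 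Using Carathéodory convergence of simply connected domains (applied to the uniformizations of $\psi_{f_n}(U_{f_n})$ normalized at, say, a fixed interior point of $[-1,1]$), after passing to a further subsequence $\psi_{f_n}(U_{f_n})\to W$ in the Carathéodory sense for some simply connected $W$ with $\diam W\le 1/\mu<1/(\mu/2)$. The maps $\psi_{f_n}^{-1}$ are univalent on $\psi_{f_n}(U_{f_n})\supset N_s([-1,1])$ with uniformly bounded image (inside a fixed disk, since $U_{f_n}\subset p_{\alpha_n}(V_{f_n})$ and the $V_{f_n}$, hence the $U_{f_n}$, are controlled), so along the subsequence $\psi_{f_n}^{-1}\to\Psi^{-1}$ on $W$, and one defines $U_f$ as the corresponding limiting preimage domain and $\psi=\Psi$; univalence survives by Hurwitz, since $\psi_{f}$ is non-constant.

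Finally I would verify (i) and (ii) for $(f,\mu/2)$. Property (i), $\diam(\psi(U_f))=\diam W\le 1/\mu<2/\mu$, is immediate. For (ii) I note $p_{\alpha_n}\to p_\alpha$ locally uniformly on $\bbC\setminus(0,+\infty)$ (the branch is continuous in $\alpha$), so $V_{f_n}=p_{\alpha_n}^{-1}(U_{f_n})\cup -p_{\alpha_n}^{-1}(U_{f_n})$ converges (Carathéodory) to $V_f=p_\alpha^{-1}(U_f)\cup -p_\alpha^{-1}(U_f)$; lower semicontinuity of the modulus under Carathéodory convergence (annuli in the limit can be pulled back to annuli of nearly the same modulus in the approximating configurations, and conversely a separating annulus of modulus $>\mu$ persists) gives
$$
\mod(V_f,\psi(U_f))\ \ge\ \limsup_n \mod(V_{f_n},\psi_{f_n}(U_{f_n}))\ \ge\ \mu\ >\ \mu/2,
$$
and in particular $V_f\Subset\psi(U_f)$. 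Hence $f\in\mathbf H^{\{\alpha\}}(\mu/2)$.

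The main obstacle I expect is the domain-limit step: making precise the Carathéodory convergence of the three interrelated domains $U_{f_n}$, $\psi_{f_n}(U_{f_n})$, and $V_{f_n}$ simultaneously, choosing consistent basepoints, and ruling out degeneration (the domains collapsing or escaping to infinity). This is where the uniform lower bound $\mod(V_{f_n},\psi_{f_n}(U_{f_n}))>\mu$ and the uniform diameter bound do the real work: the modulus bound keeps $\partial(\psi_{f_n}(U_{f_n}))$ uniformly away from the compact core $N_s([-1,1])$ and simultaneously prevents the core from shrinking, while the diameter bound prevents escape to $\infty$; together they force a non-degenerate simply connected limit. The passage of the modulus inequality to the limit, using semicontinuity, is then the second delicate point, but it is standard once the convergence of domains is established, and the loss from $\mu$ to $\mu/2$ is a comfortable safety margin absorbing any slack in the estimates.
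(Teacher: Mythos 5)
Your proof follows the same overall strategy as the paper's: uniform boundedness from the diameter bound and Lemma~\ref{definite_r_lemma}, Montel's theorem for the maps, Carath\'eodory convergence for the marked domains $(U_{f_n},0)$ and $(\psi_{f_n}(U_{f_n}),\psi_{f_n}(0))$, and a Hurwitz-type argument to preserve univalence in the limit. Where the paper packages the domain-convergence machinery by citing McMullen's Theorems~5.2 and~5.6 from \cite{McM-ren1} (Carath\'eodory compactness of marked disks, and convergence of univalent maps between Carath\'eodory-convergent disks), you re-derive it by hand; the content is the same. Your identification of the ``main obstacle'' (simultaneous, non-degenerate Carath\'eodory limits of $U_{f_n}$, $\psi_{f_n}(U_{f_n})$, $V_{f_n}$) is exactly what those two McMullen theorems handle, with the uniform modulus and diameter bounds preventing degeneration.

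Two small remarks. First, the terminology in your modulus step is inverted: the inequality you write and need, $\mod(V_f,\psi(U_f))\ge\limsup_n\mod(V_{f_n},\psi_{f_n}(U_{f_n}))$, is an \emph{upper}-semicontinuity-type bound, not lower semicontinuity. Second, the passage of property~(ii) of Definition~\ref{H_mu_def} to the limit (with $\mu$ replaced by $\mu/2$) is the one step that both your argument and the paper's state tersely; the paper simply asserts that the conclusion of McMullen's Theorem~5.6 ``immediately implies the lemma.'' If you want this airtight without invoking McMullen, you should justify explicitly why a definite separating annulus persists (for instance, by extracting from $\psi_{f_n}(U_{f_n})\setminus\overline{V_{f_n}}$ an essential round annulus of modulus bounded below in terms of $\mu$, using the fact that $\overline{V_{f_n}}\supset[-1,1]$ and $\diam\psi_{f_n}(U_{f_n})<1/\mu$, and passing that fixed round annulus to the Carath\'eodory limit). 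The $\mu\to\mu/2$ slack is there precisely to absorb any such loss.
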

\begin{proof}
Let $\mathcal F^I(\mu)$ be the family of all pairs $(U_f, \psi)$, such that $\psi$ is a univalent analytic map of the domain $U_f$, and both $\psi$ and $U_f$ satisfy Definition~\ref{H_mu_def} for some $f\in\mathbf H^I(\mu)$.
Let $\mathcal E^I(\mu)$ be the family of all marked domains $(U_f,0)$, such that $(U_f,\psi)\in\mathcal F^I(\mu)$, for some map $\psi$.
According to Lemma~\ref{definite_r_lemma} and Theorem~5.2 from~\cite{McM-ren1}, the family $\mathcal E^I(\mu)$ is relatively compact in the space of all marked topological disks with respect to the Carath\'eodory topology. Furthermore, it follows from Definition~\ref{H_mu_def} that the sets $\psi(U_f)$ are uniformly bounded for all $(U_f,\psi)\in \mathcal F^I(\mu)$. Similarly, since the set $I$ is bounded, it follows from property~(ii) of Definition~\ref{H_mu_def} that the sets $U_f$ are uniformly bounded for all $(U_f,\psi)\in \mathcal F^I(\mu)$. 

Now, since all maps $\psi$ that appear in $\mathcal F^I(\mu)$, belong to $\tilde{\mathfrak B}_s$ and are uniformly bounded, then by Montel's theorem, every sequence in $\mathcal F^I(\mu)$ has a subsequence $(U_n,\psi_n)$, such that $\psi_n$ converge to a map $\tilde{\psi}$ which is analytic in $N_s([-1,0])$. Relative compactness of $\mathcal E^I(\mu)$ implies that after passing to a subsequence again, we ensure that the sequences of marked domains $(U_n,0)$ and $(\psi_n(U_n),\psi_n(0))$ converge to $(U,0)$ and $(V,\tilde{\psi}(0))$ respectively in Carath\'eodory topology. Finally, it follows from Theorem~5.6 of~\cite{McM-ren1} that the limit map $\tilde{\psi}$ is defined and univalent in $U\Supset N_s([-1,0])$. The latter immediately implies the lemma.
\end{proof}

The following theorem is a direct consequence of real {\it a priori} bounds (see e.g.~\cite{deMelo_vanStrien}).

\begin{theorem}[{\bf Real bounds}]
\label{real bounds_extended}
For every finite non-empty set $\Theta\subset\mathfrak P$, there exists a family of unimodal maps $\hat{\mathcal S}_\Theta^\infty\subset\mathcal S_\Theta^\infty$, such that the following holds:

(i) for every bounded set $I\subset(1,+\infty)$ and for every $\mu\in(0,1)$, $s\in\bbR$, such that $0<s\le s(\mu)$, where $s(\mu)$ is the same as in Lemma~\ref{definite_r_lemma}, the set $\overline{\mathbf H^I(\mu)}^s\cap \hat{\mathcal S}_\Theta^\infty$ is compact in $\dist_s$-metric;

(ii) for every positive real number $r>0$ and every  relatively compact family $S\subset\tilde{\mathfrak A}_r^I$ of unimodal maps, there exists $K_2=K_2(r, S)\in\bbN$ such that for every $n\ge K_2$, we have
$\cR^n(S\cap\mathcal S_\Theta^\infty)\subset\hat{\mathcal S}_\Theta^\infty$.
\end{theorem}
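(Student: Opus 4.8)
The plan is to take $\hat{\mathcal S}_\Theta^\infty$ to be the set of infinitely renormalizable maps with combinatorics in $\Theta$ that carry complex bounds uniformly over all renormalization levels:
$$\hat{\mathcal S}_\Theta^\infty:=\Bigl\{\,f\in\mathcal S_\Theta^\infty\ :\ \cR^n(f)\in\mathbf H^{\{\alpha_f\}}(\mu)\ \text{for all }n\ge 0,\ \text{for some }\mu=\mu(f)>0\,\Bigr\},$$
where $\alpha_f$ is the critical exponent of $f$; then $\hat{\mathcal S}_\Theta^\infty\subset\mathcal S_\Theta^\infty$ and $\cR(\hat{\mathcal S}_\Theta^\infty)\subset\hat{\mathcal S}_\Theta^\infty$ hold by construction. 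The two inputs I would draw from real \emph{a priori} bounds for non-flat infinitely renormalizable unimodal maps of bounded combinatorial type (\cite{deMelo_vanStrien}), applied with the finite constraint $\Theta$ (so all renormalization periods involved are bounded), are: \textbf{(A)} a universal $\mu_0=\mu_0(\Theta)\in(0,1)$ such that any $f\in\mathcal S_\Theta^\infty$ whose level-$0$ geometry (equivalently, the sup-norm of $j_{\alpha_f}^{-1}(f)$ on a fixed neighbourhood of $[-1,0]$) is $\le G$ satisfies $\cR^n(f)\in\mathbf H^{\{\alpha_f\}}(\mu_0)$ for all $n\ge n_0(G,\Theta)$, satisfies $\cR^n(f)\in\mathbf H^{\{\alpha_f\}}(\nu_n(G,\Theta))$ for each of the finitely many $0\le n<n_0$, and has every renormalization interval of length bounded below by a positive $\delta_n(G,\Theta)$ with $\delta_n\ge\delta_\infty(\Theta)$ once $n\ge n_0$; and \textbf{(B)} continuity of one step of renormalization along $\dist_s$-convergent sequences of renormalizable maps with a fixed period $m$ and permutation $\theta_\ast\in\Theta$, complex bounds $\mathbf H^{\tilde I}(\nu)$ for a fixed bounded $\tilde I$ and fixed $\nu>0$, and renormalization intervals of length $\ge\delta>0$. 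Input (A) is the classical Koebe-distortion passage from real to complex bounds, carried out on the univalent long-branch factors that build the $\psi$-part of $\cR^n(f)$; uniformity in $\alpha$ on bounded exponent sets is automatic since $p_\alpha$ varies continuously and only univalent pieces enter the estimates. Input (B) holds because the renormalization interval cannot collapse, its orbit stays definitely separated, the permutation passes to the limit, and the \emph{minimal} period cannot drop to a proper divisor --- that would make $\theta_\ast$ combinatorially reducible, contradicting that it is the minimal-period permutation of each term of the sequence.

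Property~(ii) would then follow at once: given $r>0$ and $S$ relatively compact in $\tilde{\mathfrak A}_r^I$, the level-$0$ geometry is bounded on $\overline{S}^r$ by some $G=G(r,S)$, and taking $K_2:=n_0(G,\Theta)$, for any $f\in S\cap\mathcal S_\Theta^\infty$ and $n\ge K_2$ the map $g:=\cR^n(f)$ lies in $\mathcal S_\Theta^\infty$ and has $\cR^m(g)=\cR^{n+m}(f)\in\mathbf H^{\{\alpha_f\}}(\mu_0)$ for all $m\ge 0$ by (A), so $g\in\hat{\mathcal S}_\Theta^\infty$.

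For property~(i) I would fix a bounded $I$, $\mu\in(0,1)$ and $0<s\le s(\mu)$; by Lemma~\ref{H_mu_compact_lemma} the set $\overline{\mathbf H^I(\mu)}^s$ is $\dist_s$-compact, so it suffices to show its intersection with $\hat{\mathcal S}_\Theta^\infty$ is $\dist_s$-closed. So let $f_k\to f$ with $f_k\in\overline{\mathbf H^I(\mu)}^s\cap\hat{\mathcal S}_\Theta^\infty$. By Lemma~\ref{H_mu_compact_lemma}, $f$ has exponent $\alpha\in\overline I$ (with $\alpha_{f_k}\to\alpha$) and $f\in\mathbf H^{\{\alpha\}}(\mu/2)$, while every $f_k\in\mathbf H^{\{\alpha_{f_k}\}}(\mu/2)$, so the level-$0$ geometry of $f_k$ is $\le G(\mu)$ uniformly in $k$. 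Applying (A) to each $f_k$ and taking a minimum over the finitely many small levels produces $\nu=\nu(\mu,\Theta)>0$ and $\delta=\delta(\mu,\Theta)>0$ with $\cR^n(f_k)\in\mathbf H^{\tilde I}(\nu)$ and renormalization interval of length $\ge\delta$ for all $n\ge 0$ and all $k$, where $\tilde I$ is a fixed bounded interval containing $\alpha$ and all $\alpha_{f_k}$. Passing to a subsequence so that the period and permutation of $\cR^n(f_k)$ are independent of $k$ for each fixed $n$, an induction using (B) gives that $f$ is renormalizable $n$ times with the same periods and permutations in $\Theta$ and $\cR^n(f_k)\to\cR^n(f)$ in $\dist_s$ (shrinking $s$ finitely often if necessary). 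In particular $f\in\mathcal S_\Theta^\infty$; and since $\cR^n(f_k)\to\cR^n(f)$ inside $\mathbf H^{\tilde I}(\nu)$, Lemma~\ref{H_mu_compact_lemma} yields $\cR^n(f)\in\mathbf H^{\{\alpha\}}(\nu/2)$ for every $n$, i.e. $f\in\hat{\mathcal S}_\Theta^\infty$. Hence $\overline{\mathbf H^I(\mu)}^s\cap\hat{\mathcal S}_\Theta^\infty$ is closed in a compact set, hence compact.

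The heart of the argument is the compactness in~(i), whose content is precisely that the $\dist_s$-limit of a sequence from $\hat{\mathcal S}_\Theta^\infty$ staying in $\overline{\mathbf H^I(\mu)}^s$ loses neither renormalizability nor combinatorial type. I expect two delicate points. First, that the real-bounds thresholds and interval estimates in (A) are genuinely uniform over the compact family $\overline{\mathbf H^I(\mu)}^s$ and over $\alpha$ in a bounded interval --- which relies on this being a compact family of analytic non-flat unimodal maps and on continuity of $\alpha\mapsto p_\alpha$. Second, the ``no period drop'' step in (B), where one uses that the minimal-period permutations lie in the finite set $\Theta$ and are therefore combinatorially irreducible, so that a proper-divisor period cannot materialize in the limit. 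The remaining ingredients --- the skew-product bookkeeping for $\cR$, the Koebe control of long branches, and passing complex bounds to limits via Lemma~\ref{H_mu_compact_lemma} --- are routine.
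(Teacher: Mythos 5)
The paper does not actually prove Theorem~\ref{real bounds_extended}: the authors state only that it is a ``direct consequence of real \emph{a priori} bounds'' and cite \cite{deMelo_vanStrien}, leaving even the set $\hat{\mathcal S}_\Theta^\infty$ undefined. So there is no argument in the paper to compare yours against, only an implied intent, and your reconstruction is a reasonable one. Note, though, that your definition of $\hat{\mathcal S}_\Theta^\infty$ via \emph{complex} bounds is probably not what the phrase ``direct consequence of real a priori bounds'' points at; the more natural choice is to cut out $\hat{\mathcal S}_\Theta^\infty$ by universal \emph{real} geometric bounds at every renormalization level, so that (ii) is literally the content of the real-bounds theorem. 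Your version imports Theorem~\ref{complex bounds_extended} (stated later, but itself a citation, so no circularity arises) and makes (ii) essentially automatic while pushing the work into (i). Since the two definitions coincide on $\overline{\mathbf H^I(\mu)}^s$ and on all deep renormalizations of precompact families, the theorem comes out the same either way.

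The one step you should tighten is the ``no period drop'' part of input~(B). The argument that a shorter minimal period in the limit would make $\theta_\ast$ ``combinatorially reducible'' captures the right intuition, but as written it does not explain why a shorter restrictive interval could not simply materialise in the limit via a parabolic degeneration, nor why the \emph{maximal} restrictive interval (which Definition~\ref{renormalization_def} uses) depends continuously on the map. The standard, cleaner argument is analytic rather than combinatorial: the uniform complex bounds $\cR^n(f_k)\in\mathbf H^{\tilde I}(\nu)$ give polynomial-like restrictions with modulus bounded below, hence the boundary periodic orbit of each restrictive interval is repelling with multiplier uniformly bounded away from $1$; such orbits persist under $\dist_s$-limits, no parabolic can appear, and therefore neither the minimal period nor the maximal restrictive interval can jump. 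I would call this an imprecision rather than a genuine gap, since the analytic fact is exactly what the a priori bounds you already invoke supply.
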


The next statement is a form of complex {\it a priori} bounds:

\begin{theorem}[{\bf Complex bounds}]
\label{complex bounds_extended}
For every compact set $I\subset(1,+\infty)$, there exists a constant $\mu=\mu(I)>0$  such that
the following holds. For every positive real number $r>0$ and every pre-compact family $S\subset\tilde{\mathfrak A}_r^I$ of unimodal maps, there exists $K_1=K_1(r, S)\in\bbN$ such that if $f\in S$ is
an $n+1$ times renormalizable unimodal map, where $n\geq K_1$, then 
for every $g\in\tilde{\mathfrak A}_r^I$, sufficiently close to $f$ in $\dist_r$-metric, we have $\cR^n(g)\in\mathbf H^I(\mu)$.
\end{theorem}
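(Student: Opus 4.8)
The plan is to reduce the statement to complex \emph{a priori} bounds for a single deeply renormalizable map, establish those by adapting the classical Sullivan--McMullen--Lyubich construction to the branched map $p_\alpha$, and then recover the uniform form stated in the theorem by a routine openness-plus-compactness argument. For the reduction, the first point is that membership in $\mathbf H^I(\mu)$ is stable under perturbation with a loss of a factor $2$ in the modulus: if $\cR^n(f)\in\mathbf H^{\{\alpha\}}(\mu_0)$, witnessed by a domain $U$ and a univalent map $\psi=j_\alpha^{-1}(\cR^n(f))$ as in Definition~\ref{H_mu_def}, then for every $g\in\tilde{\mathfrak A}_r^I$ with $\cR^n(g)$ sufficiently $\dist_r$-close to $\cR^n(f)$ one has $\cR^n(g)\in\mathbf H^I(\mu_0/2)$, since $U$ may be shrunk slightly and the map, the diameter of its image and the modulus of Definition~\ref{H_mu_def}(ii) all vary continuously; and $\dist_r$-closeness of $g$ to $f$ suffices, because $\cR^n$ is continuous on the open set of $n+1$ times renormalizable maps of a fixed combinatorics and, by real bounds, $g$ shares with $f$ the number of renormalizations and their combinatorics once it is close. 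Granting this, it is enough to produce $\mu_0=\mu_0(I)>0$ such that every $f\in\tilde{\mathfrak A}_r^I$ with exponent $\alpha\in I$ which is $n+1$ times renormalizable, with $n$ at least some $K_1(f)$, satisfies $\cR^n(f)\in\mathbf H^{\{\alpha\}}(\mu_0)$: one then covers the compact set $\overline S^r$ by finitely many $\dist_r$-balls on which the perturbation statement applies at levels $\ge K_1(f)$, sets $\mu=\mu_0/2$ and $K_1=\max K_1(f)$, and observes that for $f\in S$ which is not $n+1$ times renormalizable for $n\ge K_1$ the conclusion is vacuous.

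For the core step, fix such an $f$ and set $h=\cR^n(f)$, $g_0=\cR^{n-1}(f)$, so that $h=\cR(g_0)=A\circ g_0^{\,q}\circ A^{-1}$ where $q$ is the renormalization period of $g_0$ (bounded, since $S$ is precompact and only finitely many combinatorial types occur) and $A=A_{p,q}$ is the rescaling of Definition~\ref{renormalization_def}. Because $g_0$ is even and, consistently with the paper's framework, $\cR(g_0)$ again has its critical point at $0$, the affine map $A$ fixes $0$ and is therefore linear; writing $g_0=\psi_0\circ p_\alpha$ near $0$ we get on the real line $h(x)=\tilde\Psi(-|x|^\alpha)$ with
\[
\tilde\Psi=A\circ g_0^{\,q-1}\circ\psi_0\circ m_c,\qquad m_c(w)=cw,
\]
for a positive constant $c$. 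It thus remains to exhibit a domain $U\supset[-1,0]$ on which $\tilde\Psi$ is univalent, with $\diam\tilde\Psi(U)<1/\mu_0$ and $\mod\big(p_\alpha^{-1}(U)\cup -p_\alpha^{-1}(U),\,\tilde\Psi(U)\big)>\mu_0$; then $j_\alpha^{-1}(h)=\tilde\Psi$ gives $h\in\mathbf H^{\{\alpha\}}(\mu_0)$. The factor $g_0^{\,q-1}$ is a composition of branches of $g_0$ evaluated along the orbit $g_0(I^*_{g_0}),g_0^2(I^*_{g_0}),\dots$ of the renormalization interval, none of which contains the critical point; by real \emph{a priori} bounds (Theorem~\ref{real bounds_extended}, \cite{deMelo_vanStrien}) these intervals are pairwise disjoint with definite surrounding space, with constants uniform over the compact set $I$, so each branch extends univalently, with Koebe-controlled distortion, to a complex neighborhood of definite modulus. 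Composing them — directly, in the bounded-combinatorics case, and otherwise by the Sullivan--McMullen--Lyubich complex-bounds argument, which never uses more about the power map than remains true of $p_\alpha$ away from the critical point — together with the univalent $\psi_0$ (univalent on a complex neighborhood of $[-1,0]$ of definite modulus, by the inductive hypothesis $g_0\in\mathbf H(\mu_0)$ via Lemma~\ref{definite_r_lemma}, or at the base level by the pull-back construction) and the linear maps, yields $\tilde\Psi$ univalent on a definite complex neighborhood $U$ of $[-1,0]$. The only non-univalent ingredient is $p_\alpha$ — which is exactly why $h$ lands in $\mathbf H$ rather than in a space of polynomial-like maps — and one must arrange every domain pulled back through $p_\alpha$ (and through the branches of $g_0$ near the critical value) to be thin enough angularly for the relevant branch of $p_\alpha^{-1}$ to be single-valued, which real bounds guarantee by confining all intervals and their chosen complex neighborhoods to a thin neighborhood of $\bbR$.

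For the moduli, the upper bound $\diam\tilde\Psi(U)<1/\mu_0$ holds because $\tilde\Psi$ is a composition of a bounded number of univalent maps of bounded distortion acting on domains that stay within a bounded region, followed by $A$, which normalizes the renormalization interval to $[-1,1]$. For the lower modulus bound one uses the hypothesis that $h=\cR^n(f)$ is itself renormalizable: real bounds then supply, around the next little interval inside the renormalization interval of $h$, a fundamental annulus of definite modulus, whose pull-back through the univalent part of the construction and through the single passage through $p_\alpha$ — where the modulus is divided by a factor comparable to $\alpha\in I$, hence by a bounded amount — separates $p_\alpha^{-1}(U)\cup -p_\alpha^{-1}(U)$ from $\partial\tilde\Psi(U)$ with a definite modulus $\mu_0=\mu_0(I)$. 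All constants entering are uniform over the compact interval $I$; the reduction step then produces $\mu=\mu(I)$ and $K_1=K_1(r,S)$ with the asserted properties.

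The substantive step is the construction of the extension: carrying the complex \emph{a priori} bounds argument through with the branch $p_\alpha$ replacing an even power $z\mapsto z^{2k}$, while keeping every estimate uniform over the compact set $I$ of exponents. The two genuinely new points to verify are the single-valuedness of the relevant branches of $p_\alpha^{-1}$ on all domains that get pulled back (made automatic by real bounds, which confine everything to a thin neighborhood of the real axis, recalling that $p_\alpha$ is not injective on its whole natural domain when $\alpha>1$), and the fact that each passage through $p_\alpha$ costs only a bounded factor in modulus (because $\alpha$ ranges over a compact subset of $(1,\infty)$). The remaining ingredients — the perturbation/openness argument, the bounded-combinatorics composition estimates, and the compactness packaging — are routine.
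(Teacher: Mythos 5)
The paper does not actually give a proof of Theorem~\ref{complex bounds_extended}: it simply records that the standard proofs of complex \emph{a priori} bounds (citing \cite{Lyubich-yam} and references therein) apply \emph{mutatis mutandis} once the power map $z\mapsto z^{2k}$ is replaced by the branch $p_\alpha$. Your proposal is, in effect, an attempt to spell out what that \emph{mutatis mutandis} amounts to, so it is by construction the ``same approach'' in spirit. You correctly isolate the two genuinely new issues in adapting the classical argument: that the inverse branches of $p_\alpha$ are single-valued only on domains thin enough near the real axis (since $p_\alpha$ wraps roughly $\alpha/2$ times around the origin), and that each passage through $p_\alpha$ divides moduli by a factor comparable to $\alpha$, which is bounded when $\alpha$ ranges over the compact set $I$. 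The perturbation-plus-compactness reduction to a single deeply renormalizable map is routine and matches the uniform-over-$S$ form of the statement.

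One point where your sketch deviates from the references the paper relies on, and which you would need to tighten if you wanted a self-contained argument: the standard complex-bounds constructions (Sullivan/Epstein class for bounded type; Levin--van Strien, Graczyk--\'Swi\c{a}tek, Lyubich--Yampolsky for general combinatorics) build the $\alpha$-polynomial-like restriction of $\cR^n(f)$ directly as a pull-back of a suitably chosen domain along the orbit of $f$ up to time $q_n$, with the definite modulus emerging cumulatively over many levels; they do not proceed by the one-step induction $\cR^{n-1}(f)\in\mathbf H(\mu_0)\Rightarrow\cR^{n}(f)\in\mathbf H(\mu_0)$ that your ``core step'' suggests, since a naive one-step argument does not by itself prevent the modulus from degrading. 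Your lower-modulus paragraph (``pull back a fundamental annulus around the next little interval'') is a heuristic that gestures at the right mechanism but is not the actual structure of those proofs. This is not a fatal flaw in your writeup given that the paper itself offers no more than a pointer to the literature; but if asked to make this precise, you should follow the telescoping pull-back construction in the cited sources rather than the level-to-level induction, and use your two new observations (thinness for branch single-valuedness, bounded loss of modulus through $p_\alpha$, uniform over $I$) exactly where the classical proofs invoke injectivity and the degree of $z\mapsto z^{2k}$.
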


\noindent
We note that the standard proofs of complex {\it a priori} bounds (see \cite{Lyubich-yam} and references therein) are given for analytic unimodal maps with even critical exponents $\alpha\in 2\NN$. However, in the above-stated form, the standard proofs apply to the case of a general exponent $\alpha$ {\it mutatis mutandis}.

\subsection{Global stable sets}
In this subsection we prove properties (ii) and (i) of Theorem~\ref{Global_attractor_theorem_expanded}.

For each $k\in\bbN$, define $\mu_k=\mu([2k-1,2k+1])$, where $\mu([2k-1,2k+1])$ is the same as in Theorem~\ref{complex bounds_extended}. According to Remark~\ref{r_small_remark}, without loss of generality we may assume that 
\begin{equation}\label{r_k_smaller_s_equation}
r(k)\le s(\mu_k/2),
\end{equation}
where $s(\mu_k/2)$ is the same as in Lemma~\ref{definite_r_lemma}.

\begin{proposition}\label{second_step_conv_proposition}
Fix a positive integer $k\in\bbN$ and a finite non-empty set $\Theta\subset\mathfrak P$. 
Let $r=r(k)$ be the same as in Theorem~\ref{renormalization_2k_theorem}.
For any open set $\mathcal O\subset\mathfrak A_r^{(1,+\infty)}$, such that $\mathcal I_\Theta^{2k}\subset\mathcal O$, there exist an open interval $I=I(\mathcal O)\subset [2k-1,2k+1]$ and a positive integer $L\in\bbN$, with the property that $2k\in I$, and for every $f\in\mathbf H^{I}(\mu_k)\cap\hat{\mathcal S}_\Theta^\infty$, we have $\cR^L(f)\in\mathcal O$.
\end{proposition}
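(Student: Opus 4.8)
The plan is to exploit the hyperbolicity of $\cR^N$ at $\alpha = 2k$ together with the compactness provided by the real and complex bounds. First I would recall that, by Lemma~\ref{hyperb_lemma} and Theorem~\ref{renormalization_2k_theorem}, the set $\mathcal I_\Theta^{2k}$ is a locally maximal uniformly hyperbolic set for $\cR^N$ on $\mathfrak A_r^{2k}$; hence its local stable set (in the space $\mathfrak A_r^{2k}$, or in $\mathfrak A_r^I$ for small $I$) is an open neighborhood of $\mathcal I_\Theta^{2k}$ in its stable manifold, and more to the point, every orbit of $\cR^N$ that stays in a sufficiently small neighborhood $\mathcal W$ of $\mathcal I_\Theta^{2k}$ for all forward iterates converges to $\mathcal I_\Theta^{2k}$. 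Since $\mathcal I_\Theta^{2k} \subset \mathcal O$ and $\mathcal O$ is open, after shrinking we may assume $\mathcal W \subset \mathcal O$. So the problem reduces to: given $f \in \mathbf H^{I}(\mu_k) \cap \hat{\mathcal S}_\Theta^\infty$, show that some definite number $L$ of further renormalizations land (and then stay) in $\mathcal W$.

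Next I would argue by compactness and contradiction. Suppose the proposition fails for every open interval $I \ni 2k$ and every $L$. Then for each $n$ we can take $I_n$ shrinking to $\{2k\}$ and maps $f_n \in \mathbf H^{I_n}(\mu_k) \cap \hat{\mathcal S}_\Theta^\infty$ such that $\cR^{Nj}(f_n) \notin \mathcal W$ for some $j \le n$ — or, organizing the counterexamples more carefully, such that the orbit of $f_n$ under $\cR^N$ avoids $\mathcal W$ for the first $n$ steps. By (\ref{r_k_smaller_s_equation}) we have $r = r(k) \le s(\mu_k/2) \le s(\mu_k)$, so by Lemma~\ref{definite_r_lemma} and Lemma~\ref{H_mu_compact_lemma} the family $\mathbf H^{I_n}(\mu_k)$ is contained in $\tilde{\mathfrak A}_s^{\overline{I_n}}$ and is relatively compact there; combined with part~(i) of Theorem~\ref{real bounds_extended}, the sets $\overline{\mathbf H^{[2k-1,2k+1]}(\mu_k)}^{s} \cap \hat{\mathcal S}_\Theta^\infty$ are compact in the $\dist_s$-metric. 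Passing to a subsequence, $f_n \to f_\infty$ with critical exponent $\alpha_\infty$; since $I_n \downarrow \{2k\}$, necessarily $\alpha_\infty = 2k$, and by Lemma~\ref{H_mu_compact_lemma}, $f_\infty \in \mathbf H^{\{2k\}}(\mu_k/2) \cap \hat{\mathcal S}_\Theta^\infty \subset \mathcal S_\Theta^\infty \cap \mathfrak A_r^{2k}$ (using $r \le s(\mu_k/2)$ again to get it into the right Banach ball). Moreover $f_\infty$ is infinitely renormalizable with combinatorics in $\Theta$.

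The key point is then that $f_\infty$ is an infinitely $\cR^N$-renormalizable analytic unimodal map of bounded type $\Theta$ with even critical exponent $2k$; by the classical theory recorded in Theorem~\ref{renormalization_2k_theorem} (more precisely, by the global convergence of renormalization for bounded type established by Sullivan--McMullen--Lyubich, which is what makes $\mathcal I_\Theta^{2k}$ the \emph{global} attractor on $\mathbf A^{2k} \cap S_\Theta^\infty$ — here invoked via property~(ii) of Theorem~\ref{Global_attractor_theorem_expanded} at $\alpha = 2k$, applied to the horseshoe element of $\mathcal I_\Theta^{2k}$ with the same $\rho$), the orbit $\cR^{Nj}(f_\infty)$ converges to $\mathcal I_\Theta^{2k}$, hence enters $\mathcal W$ for all large $j$, say for $j \ge L_0$. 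Fix $L = N L_0$. By continuity of $\cR^{N L_0}$ on a neighborhood of $f_\infty$ in $\mathfrak A_r^I$ (it is real-analytic, being a rescaled finite composition) and the $\dist_s$-convergence $f_n \to f_\infty$ (noting $\dist_s$ dominates $\dist_r$ since $r \le s$, so convergence in the larger domain gives convergence in the smaller), we get $\cR^{N L_0}(f_n) \in \mathcal W$ for all large $n$, contradicting the choice of $f_n$. This yields the desired $I$ and $L$.

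The main obstacle, and the step requiring the most care, is the limiting argument's legitimacy: one must make sure that the complex bounds condition "$f_n \in \mathbf H^{I_n}(\mu_k)$" is preserved under Carath\'eodory limits with a controlled loss of modulus (exactly the content of Lemma~\ref{H_mu_compact_lemma}, giving $\mu_k/2$ in the limit), and that $\hat{\mathcal S}_\Theta^\infty$ is closed in the relevant compact set (part~(i) of Theorem~\ref{real bounds_extended}) so that the limit $f_\infty$ is genuinely infinitely renormalizable of type $\Theta$ rather than merely finitely so. A secondary subtlety is bookkeeping the two radii $r$ and $s$ and the exponents $N$ versus $1$: one should phrase everything in terms of $\cR^N$-orbits and use that $\cR^N(\mathcal O_{2k}) \subset \mathfrak A_{2r}^{2k}$ together with (\ref{r_k_smaller_s_equation}) to keep all maps in the Banach balls where the operators and the metric comparisons are valid.
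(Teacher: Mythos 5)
Your overall strategy (compactness via real/complex bounds at exponent $2k$, plus global convergence of renormalization at $2k$, plus continuity of $\cR^L$) is the same as the paper's, but your contradiction argument has a genuine quantifier gap that the paper's proof is specifically structured to avoid.

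You negate the proposition and then claim to produce, for each $n$, a single map $f_n\in\mathbf H^{I_n}(\mu_k)\cap\hat{\mathcal S}_\Theta^\infty$ whose $\cR^N$-orbit avoids $\mathcal W$ for the first $n$ steps. This does not follow from the negation. The negation only says: for every $I$ and every $L$ there is \emph{some} $f$ (depending on both $I$ and $L$) with $\cR^L(f)\notin\mathcal O$. It does not produce a map that is simultaneously ``bad'' for all $L\le Nn$. You acknowledge this (``organizing the counterexamples more carefully'') but do not say how; in fact it cannot be done directly, and your argument becomes circular because your eventual $L_0$ is extracted from the limit $f_\infty$, which itself depends on the sequence $f_n$, which you wanted to choose using $L_0$.

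The paper breaks this circle by fixing $L$ \emph{first}, before passing to any limit, and doing so uniformly: by Theorem~\ref{real bounds_extended}(i) and \eqref{r_k_smaller_s_equation}, the set $\overline{\mathbf H^{\{2k\}}(\mu_k/2)}^r\cap\hat{\mathcal S}_\Theta^\infty$ (critical exponent exactly $2k$) is compact, and then global convergence in Theorem~\ref{renormalization_2k_theorem} yields a \emph{single} $L$ with $\cR^L\bigl(\overline{\mathbf H^{\{2k\}}(\mu_k/2)}^r\cap\hat{\mathcal S}_\Theta^\infty\bigr)\Subset\mathcal O$. One also needs $L>K_1(r,\mathbf H^{[2k-1,2k+1]}(\mu_k))$ from Theorem~\ref{complex bounds_extended}, which guarantees that $\cR^L$ actually maps a neighborhood of the relevant compact set into $\mathfrak A_r^{[2k-1,2k+1]}$ --- a point your sketch does not address. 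With this fixed $L$, the compactness/continuity argument over $I$ alone (using Lemma~\ref{H_mu_compact_lemma} to conclude that any $\dist_r$-limit of maps in $\mathbf H^{I_n}(\mu_k)\cap\hat{\mathcal S}_\Theta^\infty$ with $I_n\downarrow\{2k\}$ lands in $\mathbf H^{\{2k\}}(\mu_k/2)\cap\hat{\mathcal S}_\Theta^\infty$) closes the proof without circularity. Finally, the opening digression on local maximality and the neighborhood $\mathcal W$ is unnecessary: the statement requires only that a single iterate $\cR^L(f)$ lies in $\mathcal O$, not that the orbit enter and remain near the horseshoe, so the global convergence theorem at $\alpha=2k$ is all that is needed.
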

\begin{proof}
Since $\Theta$ is a finite set, it follows from~(\ref{r_k_smaller_s_equation}) and Theorem~\ref{real bounds_extended} that the set $\overline{\mathbf H^{\{2k\}}(\mu_k/2)}^r\cap\hat{\mathcal S}_\Theta^\infty$ is compact in $\tilde{\mathfrak A}_r^{[2k-1,2k+1]}$. Together with global convergence to the attractor $\mathcal I_\Theta^{2k}$, guaranteed by Theorem~\ref{renormalization_2k_theorem}, this implies existence of a positive constant $L\in\bbN$, such that 
$$
\cR^L\left(\overline{\mathbf H^{\{2k\}}(\mu_k/2)}^r\cap\hat{\mathcal S}_\Theta^\infty\right)\Subset\mathcal O, 
$$
and
$$
L>K_1(r, \mathbf H^{[2k-1,2k+1]}(\mu_k)),
$$
where $K_1(r, \mathbf H^{[2k-1,2k+1]}(\mu_k))$ is the same as in Theorem~\ref{complex bounds_extended}. (The last inequality ensures that the operator $\cR^L$ maps a neighborhood of $\overline{\mathbf H^{[2k-1,2k+1]}(\mu_k)}^r\cap\hat{\mathcal S}_\Theta^\infty$ to $\mathfrak A_r^{[2k-1,2k+1]}$.)

Now, Lemma~\ref{H_mu_compact_lemma} and continuity of the operator $\cR^L$ on the sequentially compact family $\overline{\mathbf H^{[2k-1,2k+1]}(\mu_k)}^r\cap\hat{\mathcal S}_\Theta^\infty$ imply existence of the interval $I$ that satisfies the lemma.
\end{proof}

\begin{proof}[Proof of Lemma~\ref{global_lemma}]
First, we prove property~(ii). 

Let $J_1$ be the same as in Lemma~\ref{hyperb_lemma}.
It follows from hyperbolicity of the sets $\mathcal I_\Theta^\alpha$ (c.f.~Lemma~\ref{hyperb_lemma}), that there exist an open interval $J_2\subset J_1$, such that $2k\in J_2$, and an open set $\mathcal O\subset\coprod_{\alpha\in J}\mathcal O_\alpha\subset\mathfrak A_r^{J_1}$, such that for any $\alpha\in J_2$, we have $\mathcal I_\Theta^\alpha\subset \mathcal O$, and for any unimodal map $f\in\mathcal O$ with critical exponent $\alpha\in J_2$, the sequence of iterates $\cR^N(f),\cR^{2N}(f),\cR^{3N}(f),\dots$ either eventually leaves the set $\mathcal O$, or stays in it forever and converges to the invariant set $\mathcal I_\Theta^\alpha$.

Fix $J=I(\mathcal O)\subset J_2$, where $I(\mathcal O)$ is the same as in Proposition~\ref{second_step_conv_proposition}. Now it follows from Theorem~\ref{complex bounds_extended} and Theorem~\ref{real bounds_extended} that for every $f\in \mathfrak A^J\cap\mathcal S_\Theta^\infty$, there exists a positive integer $K=K(f)\in\bbN$, such that for every $n\ge K$, we have $\cR^n(f)\in\mathbf H^{J}(\mu_k)\cap\hat{\mathcal S}_\Theta^\infty$. Together with Proposition~\ref{second_step_conv_proposition} this implies that for every $f\in \mathfrak A^J\cap\mathcal S_\Theta^\infty$, there exists a positive integer $M=M(f)\in\bbN$, such that for every $n\ge M$, we have $\cR^n(f)\in\mathcal O$. In other words, for every $f\in \mathfrak A^J\cap\mathcal S_\Theta^\infty$, the sequence $\cR(f), \cR^2(f), \cR^3(f),\dots$ eventually enters the set $\mathcal O$ and since then never leaves it. According to our choice of the set $\mathcal O$, this implies that the considered sequence of renormalizations converges to $\mathcal I_\Theta^\alpha$, where $\alpha\in J$ is the critical exponent of $f$.
Together with hyperbolicity of $\mathcal I_\Theta^\alpha$, established in Lemma~\ref{hyperb_lemma} for all $\alpha\in J$, this implies that sufficiently high renormalizations $\cR^n(f)$ belong to the stable lamination of the set $\mathcal I_\Theta^\alpha$. This means that there exists $g\in\mathcal I_\Theta^\alpha$, such that 
condition~(\ref{pair_exp_convergence_eq}) holds for all sufficiently large $m\in\bbN$ of the form $m=Nn$.


We observe that if $\rho(f)$ and $\rho(g)$ are asymptotically different, then for every $n\in\bbN$, the renormalizations $\cR^{Nn}(f)$ and $\cR^{Nn}(g)$ cannot get arbitrarily close to each other. Thus, $\rho(f)$ is asymptotically equal to $\rho(g)$. Since, according to Lemma~\ref{hyperb_lemma}, the restrictions of $\cR^N$ to $\mathcal I_\Theta^\alpha$ and $\mathcal I_\Theta^{2k}$ are topologically conjugate, Theorem~\ref{renormalization_2k_theorem} implies that the convergence~(\ref{pair_exp_convergence_eq}) holds for all sufficiently large $m\in\bbN$ of the form $m=Nn$ and for all $g\in\mathcal I_\Theta^\alpha$, such that $\rho(f)$ and $\rho(g)$ are asymptotically equal.

Now we complete the proof of property~(ii) of Theorem~\ref{Global_attractor_theorem_expanded} by showing that $\cR(\mathcal I_\Theta^\alpha)=\mathcal I_\Theta^\alpha$. 
Indeed, according to the above argument and compactness of $\mathcal I_\Theta^\alpha$, convergence of the sequence $\{\cR^{Nn}(f)\mid n\in\bbN\}$ to $\mathcal I_\Theta^\alpha$ is uniform in $f\in\cR(\mathcal I_\Theta^\alpha)$. Since $\cR^{Nn+1}(\mathcal I_\Theta^\alpha)=\cR(\mathcal I_\Theta^\alpha)$, this implies that $\cR(\mathcal I_\Theta^\alpha)=\mathcal I_\Theta^\alpha$.

Finally, we give a proof of property~(i) of Theorem~\ref{Global_attractor_theorem_expanded}. For every $\alpha\in J$, let $h_\alpha\colon\mathcal I_\Theta^{2k}\to \mathcal I_\Theta^\alpha$ be the homeomorphism from Lemma~\ref{hyperb_lemma} that conjugates the restrictions of $\cR^N$ on $\mathcal I_\Theta^{2k}$ and $\mathcal I_\Theta^\alpha$. Define the homeomorphism $\iota_\alpha\colon\mathcal I_\Theta^\alpha\to\Theta^\bbZ$ as $\iota_\alpha=\iota_{2k}\circ h_\alpha^{-1}$. Since for all $\alpha\in J$, we have $\cR(\mathcal I_\Theta^\alpha)=\mathcal I_\Theta^\alpha$, the composition $\iota_\alpha\circ\cR\circ\iota_\alpha^{-1}$ is defined for all $\alpha\in J$ and depends continuously on $\alpha$. Then, since $\Theta^\bbZ$ is a totally disconnected space, this composition must be independent from $\alpha$. Now property~(i) of Theorem~\ref{Global_attractor_theorem_expanded} follows from the fact that for $\alpha=2k$, this composition is a shift $\sigma$, which is established in Theorem~\ref{renormalization_2k_theorem}.
\end{proof}

\section{Proof of Theorem~\ref{non_even_theorem}}\label{non_even_sec}

According to the real a priori bounds (e.g., see~\cite{deMelo_vanStrien}), there exists a real constant $\mu>1$ that depends only on $\alpha$, such that for every $g\in\mathbf U^\omega\cap\mathcal S_{\mathbf P}^\infty$ with critical exponent $\alpha$, there exists a constant $K=K(g)\in\bbN$, such that for every $k\ge K$, we have
$$
\cR^{k+1}(g)=A_{p,q}\circ g_k^m\circ A_{p,q}^{-1}, \quad\text{ where}\quad g_k=\cR^k(g), 
$$ 
and $A_{p,q}$ is an affine map with $A_{p,q}'(z)\ge\mu$. Together with the local representation~(\ref{f_equals_psi_alpha_phi_equation}), this immediately implies statement~(i) of Theorem~\ref{non_even_theorem}.

Statement~(ii) of Theorem~\ref{non_even_theorem} follows from the Koebe Distortion Theorem and the real a priori bounds stated above. Indeed, for $k>K(\phi^{-1}\circ f\circ\phi)$, it follows from the real a priori bounds that the map $\mathcal F_f^k(\phi)$ is defined and univalent in $N_{r\mu^{k-K}}([-1,1])$, hence according to the Koebe Distortion Theorem, the maps $\mathcal F_f^k(\phi)$ converge to an affine map exponentially fast in $\|\cdot\|_r$-norm. Since all maps from $\mathbf{\Phi}_r$ fix the points $-1$ and $1$, the only affine map, contained in the closure of $\mathbf{\Phi}_r$, is the identity map. Thus, $\mathcal F_f^k(\phi)\to\mathrm{id}$ exponentially fast in $\|\cdot\|_r$-norm.

\ignore{

------------------------------------------------------------------------------------------

\subsection{Hyperbolicity of renormalization}

\begin{definition}
For a positive real number $s>0$, let $\mathfrak B_s\subset\mathcal B^\bbR(N_s([-1,0]))$ be the set of all maps $\psi\in\mathcal B^\bbR(N_s([-1,0]))$ that are univalent in some neighborhood of the interval $[-1,0]$, and such that $\psi(-1)=-1$, $-1<\psi(0)<1$. 
\end{definition}

\begin{definition}
For a positive real number $s>0$, let $\mathfrak C_s\subset\mathcal B^\bbR(N_s([-1,1]))$ be the set of all $\phi\in\mathcal B^\bbR(N_s([-1,1]))$, such that $\phi(1)=1$, $\phi(-1)=-1$, and $\phi$ is univalent in some neighborhood of the interval $[-1,1]$.
\end{definition}

\begin{proposition}\label{B_r_alpha_submanifold_prop}
For any real number $s>0$, the sets $\mathfrak B_s$ and $\mathfrak C_s$ are affine submanifolds of $\mathcal B^\bbR(N_s([-1,0]))$ and $\mathcal B^\bbR(N_s([-1,1]))$ respectively.
\end{proposition}
\begin{proof}
Let $\mathcal B^\bbR_{s,-1}$ 
denote the Banach subspace of $\mathcal B^\bbR(N_s([-1,0]))$ that consists of all $\psi\in\mathcal B^\bbR(N_s([-1,0]))$, such that $\psi(-1)=0$.
Then, $\mathfrak B_s$ is an open subset of the affine Banach space $-1+\mathcal B^\bbR_{s,-1}$. The proof of the second assertion is analogous.
\end{proof}

\begin{definition}
For a positive real number $s>0$, let $\mathfrak D_s$ be the direct product of real affine Banach manifolds $\mathfrak D_s=\mathfrak C_s\times\mathfrak B_s$.
\end{definition}

For each positive $\alpha>1$ we define a map $j_\alpha\colon\mathfrak D_s\to\mathfrak A^\alpha$ that associates a unimodal map to every element of $\mathfrak D_s$ according to the formula
$$
[j_\alpha(\phi,\psi)](x)= \psi(-|\phi(x)|^\alpha).
$$

Given a positive real number $\alpha>1$, the function
$$
p_{\alpha+}\colon\bbC\setminus\bbR^-\to\bbC
$$
is defined as the branch of the map $z\mapsto -z^\alpha$ which maps positive reals to negative reals. Similarly we define the function
$$
p_{\alpha-}\colon\bbC\setminus\bbR^+\to\bbC
$$
as the branch of the map $z\mapsto -(-z)^\alpha$ which maps negative reals to negative reals.

For $(\phi,\psi)\in\mathfrak D_s$, we say that $j_\alpha(\phi,\psi)$ belongs to the space $\mathfrak A_r^\alpha$, if the compositions
$\psi\circ p_{\alpha+}\circ\phi$ and $\psi\circ p_{\alpha-}\circ\phi$ are defined in $\overline{U_{\phi^{-1}(0),r}^+}$ and $\overline{U_{\phi^{-1}(0),r}^-}$ respectively.


The proof of the following proposition is straightforward and is left to the reader.

\begin{proposition}
Let $r,s>0$ be positive real numbers and let $\mathcal U\subset\bbR\times\mathfrak D_s$ be an open subset, such that for any $(\alpha,\phi,\psi)\in\mathcal U$, we have $j_\alpha(\phi,\psi)\in\mathfrak A_r$. Then the correspondence
$$
(\alpha,\phi,\psi)\mapsto i(j_\alpha(\phi,\psi))
$$
is an analytic map from $\mathcal U$ to $\bbR\times\mathcal B^\bbR(U_{r}^+)\times\mathcal B^\bbR(U_{r}^-)$.
\end{proposition}

(
\begin{proposition}
Let $r_0,s>0$ be positive real numbers and assume that for some $\alpha_0>1$ and $(\phi_0,\psi_0)\in\mathfrak D_s$, we have $j_{\alpha_0}(\phi_0,\psi_0)\in\mathfrak A_{r_0}$. Then for any real number $r\in\bbR$, such that $0<r<r_0$, there exists a neighborhood $\mathcal U_r\subset\bbR\times\mathfrak D_s$ of $(\alpha_0,\phi_0,\psi_0)$, with the property that for any $(\alpha,\phi,\psi)\in\mathcal U_r$, we have $j_\alpha(\phi,\psi)\in\mathfrak A_r$ and the correspondence
$$
(\alpha,\phi,\psi)\mapsto i(j_\alpha(\phi,\psi))
$$
is an analytic map from $\mathcal U_r$ to $\bbR\times\mathcal B^\bbR(U_{r/2}^+)\times\mathcal B^\bbR(U_{r/2}^-)$.
\end{proposition}
)

Our second result is the following:

\begin{theorem}[{\bf Renormalization hyperbolicity}]
For every $k\in\bbN$ and a finite non-empty set $\Theta\subset\mathbf P$, there exist a positive real number $s=s(k)>0$, a positive integer $N=N(k)\in\bbN$, an open interval $I=I(k,\Theta)\subset\bbR$, such that $2k\in I$, an open set $\mathcal O=\mathcal O(k,\Theta)\subset\mathfrak D_s$ and a family of real-analytic operators $\cR_\alpha\colon \mathcal O\to\mathfrak D_s$ that analytically depend on $\alpha\in I$ and satisfy the following properties:

\begin{enumerate}[(i)]
\item for the positive real constant $r>0$ from Theorem~\ref{Global_attractor_theorem_expanded} and for all $\alpha\in I$, the inclusions $j_\alpha(\mathcal O)\subset\mathfrak A_r^\alpha\cap\mathcal S_\Theta^N$ and $\cR^N(j_\alpha(\mathcal O))\subset\mathfrak A_r^\alpha$ hold, and the  diagram
$$
\begin{CD}
\mathcal O  @>\mathcal R_\alpha>>  \mathfrak D_s \\
@VV j_\alpha V  @VV j_\alpha V\\
\mathfrak A_r^\alpha @>\cR^N>> \mathfrak A_r^\alpha
\end{CD}
$$
commutes.

\item for every $\alpha\in I$, the operator $\cR_\alpha$ has a compact invariant set $\hat{\mathcal I}_\Theta^\alpha\subset\mathcal O$ that is uniformly hyperbolic with a one-dimensional unstable direction, and the restriction of $j_\alpha$ to $\hat{\mathcal I}_\Theta^\alpha$ is a homeomorphism between $\hat{\mathcal I}_\Theta^\alpha$ and ${\mathcal I}_\Theta^\alpha$.
\end{enumerate}
\end{theorem}

--------------------------------------------------------

\begin{definition}
For real numbers $c\in(-1,1)$ and $r>0$, let $\mathfrak A_{c,r}$ denote the set that consists of all pairs of functions $(f_+,f_-)$, such that $f_\pm\colon\overline U_r^\pm\to\bbC$ are analytic on $U_r^\pm$, continuous on $\overline U_r^\pm$, and satisfy the identity $f_+(0)=f_-(0)$. The set $\mathfrak A_r$, equipped with the sup-norm is a complex Banach space.
\end{definition}

\begin{definition}
For a positive real number $r>0$, let $\mathfrak A_r$ denote the set that consists of all pairs of functions $(f_+,f_-)$, such that $f_\pm\colon\overline U_r^\pm\to\bbC$ are analytic on $U_r^\pm$, continuous on $\overline U_r^\pm$, and satisfy the identity $f_+(0)=f_-(0)$. The set $\mathfrak A_r$, equipped with the sup-norm is a complex Banach space.
\end{definition}

We will think of
$$
f=(f_+,f_-)
$$
as a multiple-valued analytic map having two branches $f_+$ and $f_-$ defined on $U_r^+$ and $U_r^-$ respectively. It is important to mention that the restriction of $f$ to the real line is a single-valued function.

\begin{definition}
Let $U\subset\bbC$ be a neighborhood of a point $z_0$ and let $l\subset\bbC$ be any ray starting from $z_0$. Consider an analytic map $f\colon U\setminus l\to \bbC$ which is continuous at $z_0$. We say that $f$ has a critical exponent $\alpha\in\bbC$ at $z_0$, if the limit
$$
\lim_{z\in U\setminus l,\,\, z\to z_0} \frac{f(z)-f(z_0)}{(z-z_0)^\alpha}
$$
exists and is a finite non-zero complex number, where by $(z-z_0)^\alpha$ we mean any fixed branch of the map that is single-valued in $U\setminus l$.
\end{definition}

\begin{definition} 
Given a positive real number $r>0$ and a complex number $\alpha\in\bbC$,
we define the set $\mathfrak A_r^\alpha\subset\mathfrak A_r$ to be the set of all pairs of maps $(f_+,f_-)\in\mathfrak A_r$, such that $f_+$ and $f_-$ both have critical exponent $\alpha$ at $0$, and $-1$ is a repelling fixed point of $f_-$.
\end{definition}

\begin{proposition}
For any positive real number $r>0$ and for every $\alpha\in\bbC$, the set $\mathfrak A_r^\alpha$ is an affine submanifold of $\mathfrak A_r$.
\end{proposition}
\begin{proof}
Let $\tilde{\mathfrak A}_r^\alpha\subset\mathfrak A_r$ be the complex affine Banach subspace of $\mathfrak A_r$ that consists of all pairs of functions $(f_+,f_-)\in\mathfrak A_r$, such that the limits
$$
\lim_{z\in U_r^+,\,\, z\to 0} \frac{f_+(z)-f_+(0)}{z^\alpha}\quad\text{ and }\quad \lim_{z\in U_r^-,\,\, z\to 0} \frac{f_-(z)-f_-(0)}{(z)^\alpha}
$$
exist and are finite, and $f_-(-1)=-1$. The set $\mathfrak A_r^\alpha$ is an open subset of $\tilde{\mathfrak A}_r^\alpha$.
\end{proof}

\subsection{The spaces of critical triples $\mathfrak P_{U,V,\eps}$ and $\mathfrak P_{U,V,\eps}^\alpha$}

Given a positive real number $\alpha>1$, the function
$$
p_{\alpha+}\colon\bbC\setminus\bbR^-\to\bbC
$$
is defined as the branch of the map $z\mapsto -z^\alpha$ which maps positive reals to negative reals. Similarly we define the function
$$
p_{\alpha-}\colon\bbC\setminus\bbR^+\to\bbC
$$
as the branch of the map $z\mapsto -(-z)^\alpha$ which maps negative reals to negative reals.
\begin{remark}
Functions $p_{\alpha+}$ and $p_{\alpha-}$ can be defined for any $\alpha\in\bbC$ by means of analytic continuation in $\alpha$-coordinate.
\end{remark}

Let $U\subset\bbC$ be a neighborhood of the origin.
Assume that the analytic map $\phi\colon U\to\bbC$ is such that $\phi(0)=0$ and $\phi'(0)$ is a non-negative and nonzero complex number. Then for $\alpha\in\bbC$, the composition $p_{\alpha+}\circ\phi$ is defined on some interval to the right of the origin. Similarly, the composition $p_{\alpha-}\circ\phi$ is defined on some interval to the left of the origin. Then the first composition can be extended to an analytic map on $U\setminus(-\infty,0]$ and the second composition can be extended to an analytic map on $U\setminus[0,+\infty)$. Let us denote the first map by 
$$
p_{\phi,\alpha+}\colon U\setminus(-\infty,0]\to\bbC,
$$
and the second map by 
$$
p_{\phi,\alpha-}\colon U\setminus[0,+\infty)\to\bbC.
$$

\begin{definition}\label{P_U_V_definition}
Given a positive real number $\eps>0$ and 
two Jordan domains $U,V\subset\bbC$, such that $\{0,-1\}\subset U$ and $0\in V$, we define the set $\mathfrak P_{U,V,\eps}$ as the set of all triples $(\alpha,\phi,\psi)$, where $\alpha\in\bbC$ and $\phi\colon N_\eps(U)\to\bbC$, $\psi\colon N_\eps(V)\to\bbC$ are analytic maps that extend continuously to the boundaries of their domains, so that 

\begin{enumerate}[(i)]
\item $\phi$ and $\psi$ are injective on the sets $\overline U$ and $\overline V$ respectively, and $\inf_{z\in \overline U}|\phi'(z)|>0$ and $\inf_{z\in \overline V}|\psi'(z)|>0$;
\item $\phi(0)=0$ and $\phi'(0)=1$;
\item the composition $\psi\circ p_{\phi,\alpha-}$ is defined in a neighborhood of the point $-1$, and $-1$ is a repelling fixed point of this composition.
\end{enumerate}
\end{definition}

\begin{definition}
The subset of $\mathfrak P_{U,V,\eps}$ with the fixed first coordinate $\alpha\in\bbC$ will be denoted by $\mathfrak P_{U,V,\eps}^\alpha$.
\end{definition}

For a bounded domain $\Omega\subset\bbC$, let $\mathbf B(\Omega)$ denote the space of all analytic maps $f\colon\Omega\to\bbC$ that continuously extend to the boundary of $\Omega$. The set $\mathbf B(\Omega)$ equipped with the sup-norm, is a complex Banach space. For a point $c\in\Omega$, we let $\mathbf B_c(\Omega)\subset \mathbf B(\Omega)$ denote the affine complex Banach subspace of $\mathbf B(\Omega)$ that consists of all functions $f$, such that $f(c)=0$ and $f'(c)=1$. The following proposition can be immediately verified:

\begin{proposition}
The sets $\mathfrak P_{U,V,\eps}$ and $\mathfrak P_{U,V,\eps}^\alpha$ are open subsets of $\bbC\times \mathbf B_0(N_\eps(U)) \times \mathbf B(N_\eps(V))$ and $\mathbf B_0(N_\eps(U)) \times \mathbf B(N_\eps(V))$ respectively, hence $\mathfrak P_{U,V,\eps}$ and $\mathfrak P_{U,V,\eps}^\alpha$ are affine complex Banach manifolds.
\end{proposition}

For $\tau=(\alpha,\phi,\psi)\in\mathfrak P_{U,V,\eps}$, one can consider the compositions
$$
f_{\tau+}=\psi\circ p_{\phi,\alpha+},\qquad f_{\tau-}=\psi\circ p_{\phi,\alpha-}.
$$
For convenience of notation we will write
$$
f_\tau=(f_{\tau+},f_{\tau-}),
$$
and we will think of $f_\tau$ as a multiple-valued analytic map having two branches $f_{\tau+}$ and $f_{\tau-}$ whose domains of definition are contained in $U$.

The following proposition is easy to verify:

\begin{proposition}\label{f_tau_mapping_proposition}
Let $U$, $V$, $\eps$ be the same as in Definition~\ref{P_U_V_definition}. Assume that for a positive real number $r_0>0$ we have $U\subset N_{r_0}([-1,1])$ and assume that $\tau_0\in\mathfrak P_{U,V,\eps}$ is such that $f_{\tau_0}\in\mathfrak A_{r_0}$.
Then for any positive real number $r<r_0$, there exists an open set $\mathcal U_r\subset \mathfrak P_{U,V,\eps}$, such that $\tau_0\in\mathcal U_r$ and for any $\tau\in\mathcal U_r$, we have $f_\tau\in\mathfrak A_r$. Furthermore, the correspondence
\begin{equation}\label{f_tau_map_eq}
\tau\mapsto f_\tau
\end{equation}
is an analytic map from $\mathcal U_r$ to the Banach manifold $\mathfrak A_r$, such that if $\tau\in\mathcal U_r\cap\mathfrak P_{U,V,\eps}^\alpha$, then $f_\tau\in\mathfrak A_r^\alpha$, for any $\alpha\in\bbC$.
\end{proposition}

\subsection{Unimodal maps and their renormalization}

Let us start with a few useful definitions. Suppose, $\mathbf B$ is a complex Banach space whose elements are functions of the complex variable. Following the 
notation of \cite{Ya3}, let us say that
the {\it real slice} of $\mathbf B$ is the real Banach space $\mathbf B^\RR$ consisting of the real-symmetric elements of $\mathbf B$.
If $\mathbf X$ is a Banach manifold modelled on $\mathbf B$ with the atlas $\{\Psi_\gamma\}$
we shall say that $\mathbf X$ is {\it real-symmetric} if $\Psi_{\gamma_1}\circ\Psi_{\gamma_2}^{-1}(U)\subset \mathbf B^\RR$ for any pair of indices $\gamma_1$, $\gamma_2$ and any open set $U\subset\mathbf B^\RR$, for which the above composition is defined. The {\it real slice of $\mathbf X$} is then defined as the real
Banach manifold $\mathbf X^\RR\subset \mathbf X$ given by $\Psi_\gamma^{-1}(\mathbf B^\RR)$ in a local chart $\Psi_\gamma$.
An operator $A$ defined on a {subset} $Y\subset\mathbf X$ is {\it real-symmetric} if $A(Y\cap\mathbf X^\RR)\subset \mathbf X^\RR$.

\begin{remark}
We note that the complex Banach manifolds $\mathfrak A_r$ and $\mathfrak P_{U,V,\eps}$ are real-symmetric. Furthermore, for $\alpha\in\bbR$, the complex Banach manifolds $\mathfrak P_{U,V,\eps}^\alpha$ and $\mathfrak A_r^\alpha$, as well as the map~(\ref{f_tau_map_eq}) from Proposition~\ref{f_tau_mapping_proposition} are also real-symmetric.
\end{remark}

\begin{definition}
Let $\alpha$ be a real number such that $\alpha>1$. A smooth map $f\colon[-1,1]\to[-1,1]$ is unimodal with critical exponent $\alpha$, if 
it can be represented as
\begin{equation}\label{f_equals_psi_alpha_phi_equation}
f(x)=\psi(-|\phi(x)|^\alpha),
\end{equation}
where $\phi$ and $\psi$ are orientation preserving diffeomorphisms of $[-1,1]$ and $-|\phi([-1,1])|^\alpha$ respectively, and $\phi(0)=0$.
\end{definition}

We will say that a unimodal map $f$ is $C^k$-smooth ($C^\infty$-smooth, or analytic), if the diffeomorphisms $\phi$ and $\psi$ are of class $C^k$ ($C^\infty$, or real-analytic).

\begin{definition}
For a positive $\alpha>1$, let $\mathfrak U_r^\alpha\subset(\mathfrak A_r^\alpha)^\bbR$ denote the space of all $f\in (\mathfrak A_r^\alpha)^\bbR$, such that the restriction of $f$ to the real line is an analytic unimodal map.
\end{definition}

\begin{remark}
We note that for the map~(\ref{f_tau_map_eq}) from Proposition~\ref{f_tau_mapping_proposition}, if $\tau=(\alpha,\phi,\psi)\in\mathcal U_r\cap (\mathfrak P_{U,V,\eps})^\bbR$ and $\alpha>1$, then $f_\tau\in\mathfrak U_r^\alpha$.
\end{remark}

\begin{definition}\label{renormalizable_def}
A unimodal map $f$ is \textit{renormalizable}, if there exists a closed interval $J=[-a,a]\subset\bbR$, where $a>0$, and an integer $m\ge 2$, such that $f^m(J)\subset J$ and the intervals $J, f(J),\dots, f^{m-1}(J)$ have pairwise disjoint interior. The smallest $m$ with this property is called the \textit{renormalization period} of $f$.
\end{definition}

For a complex number $c\in\bbC$, $c\neq 0$, let $A_c\colon\bbC\to\bbC$ denote the linear map, such that $A_c(0)=0$ and $A_c(c)=-1$. In particular, if $c\in\bbR$, then $A_c$ is real-symmetric.

\begin{definition}
Assume that a unimodal map $f$ is renormalizable with period $m$, and let $J=[-a,a]$ be the maximal interval satisfying the conditions of Definition~\ref{renormalizable_def}. 
Let $c=-a$, if $(f^m)'(-a)>0$ and $c=a$ otherwise. Then the map
$$
\cR(f)=A_c\circ f^m\circ A_c^{-1}
$$
is called the \textit{renormalization} of $f$.
\end{definition}

It is easy to check that the map $\cR(f)$ is also unimodal, and either
$$
\cR(f)(-1)=-1,\qquad\text{or}\qquad \cR(f)(1)=-1.
$$
If the map $\cR(f)$ is again renormalizable, then we say that $f$ is \textit{twice renormalizable}. This way we define $n$ times renormalizable unimodal maps, for all $n=1, 2, 3,\dots$, including $n=\infty$.

\begin{remark}
If $f$ is an infinitely renormalizable analytic unimodal map, then either $\cR^n(f)(-1)=-1$ holds for all but finitely many values of $n$, or $\cR(f)(1)=-1$ holds for all but finitely many values of $n$. This depends on the higher derivatives of the map $\phi$ from (\ref{f_equals_psi_alpha_phi_equation}) at zero. 
\end{remark}

\begin{definition}
We say that an infinitely renormalizable unimodal map has combinatorial type bounded by $B\in\bbN$, if for each $n=0,1,2,\dots$, the map $\cR^n(f)$ has renormalization period not greater than $B$. If such number $B$ exists, then we say that $f$ is of bounded combinatorial type.
\end{definition}

\begin{definition}
Let $m\in\bbN$ be a positive integer. A map $f\in\mathfrak U_r^\alpha$ is renormalizable with period $m$, if there exists a closed interval $J=[-a,a]\subset\bbR$, where $a>0$, such that the following properties hold:
\begin{enumerate}[(i)]
\item the $m$-th iterate $f^m$ is defined on a neighborhood of $J$, and either $a$, or $-a$ is a repelling fixed point of $f^m$;
\item 
\end{enumerate}

\end{definition}

}

\bibliographystyle{amsalpha}
\bibliography{biblio}
\end{document}